\documentclass[a4paper, 11pt, reqno, draft]{amsart}
\usepackage[utf8x]{inputenc}
\usepackage[title]{appendix}
\usepackage{amssymb,amscd, mathtools ,amsmath, commath}
\usepackage{amsfonts}
\usepackage{braket}
\usepackage{comment, fullpage, anysize}
\usepackage{enumerate}
\marginsize{1.2in}{1.2in}{1in}{1in}

\allowdisplaybreaks

\newtheorem{theorem}{Theorem}[section]
\newtheorem{proposition}{Proposition}[section]
\newtheorem{corollary}{Corollary} [section]
\newtheorem{mydef}{Definition}[section]
\theoremstyle{remark}
\newtheorem{remark}{Remark}[section]
\numberwithin{equation}{section}

\newcommand{\eps}{\varepsilon}
\newcommand{\diverg}{\textup{div}\,}
\newcommand{\dist}{\textup{dist}\,}
\newcommand{\plane}{\mathbb R^{2}}
\newcommand{\trace}{\textup{tr}}
\newcommand{\R}{\mathbb{R}}
\newcommand{\C}{\mathbb{C}}
\newcommand{\Z}{\mathbb{Z}}
\newcommand{\N}{\mathbb{N}}
\newcommand{\MExp}{\mathbb{E}}
\newcommand{\pP}{\mathbb{P}}
\newcommand{\sS}{\mathbb{S}}

\newcommand{\supp}{\text{supp }}

\newcommand{\F}{\boldsymbol{F}}
\newcommand{\gG}{\boldsymbol{G}}
\newcommand{\nnu}{{\bf{n}}}
\newcommand{\overbar}[1]{\mkern 1.5mu\overline{\mkern-1.5mu#1\mkern-1.5mu}\mkern 1.5mu}

\title{Vortices in a Stochastic Parabolic Ginzburg-Landau Equation}
\author{Olga Chugreeva}
\address{Lehrstuhl I f\"{u}r Mathematik RWTH Aachen University\\
Pontdriesch 14-16 52056 Aachen Germany}
\email{olga@math1.rwth-aachen.de}
\author{Christof Melcher}
\address{Lehrstuhl I f\"{u}r Mathematik  \& JARA Fundamentals of Future Information Technologies, RWTH Aachen University\\
Pontdriesch 14-16 52056 Aachen Germany}
\subjclass[2010]{Primary 60H15 Secondary 35Q56 }
\begin{document}


\maketitle

\begin{abstract}
We consider the variant of a stochastic parabolic Ginzburg-Landau equation that allows for the formation of point defects of the solution. The noise in the equation is multiplicative of the gradient type. We show that the family of the Jacobians associated to the solution is tight on a suitable space of measures. Our main result is the characterization of the limit points of this family. They are concentrated on finite sums of delta measures with integer weights. The point defects of the solution coincide with the points at which the delta measures are centered. 
\end{abstract}

\section{Introduction and statement of the results}
We study a stochastically perturbed complex Ginzburg-Landau equation in the regime when the solution develops point singularities. Our equation is a reduced version~\cite{BBH,SSVortices} of the full Ginzburg-Landau model for superconductivity~\cite{Landau}. The point singularities of the solution are a toy model for the Abrikosov vortices~\cite{Abrikosov}. By including a noise term into our equation we take into account unpredictable external fluctuations, which may affect both the formation and the behavior of the singularities.

For complex-valued functions $u_{\eps}$ that are defined on a smooth bounded simply connected domain $D\subset \plane$, we consider the equation 
\begin{equation}\label{SGL_Strat}
d u_{\eps}=\log(1/\eps)(\Delta u_{\eps}+(1/\eps^{2})(1-|u_{\eps}|^{2})u_{\eps})\,dt+
(\F\cdot\nabla) u_{\eps}\circ dB_{t}.
\end{equation}
It is a stochastically perturbed $L^{2}$-gradient flow of the Ginzburg-Landau energy functional 
\begin{equation}\label{GLEnergy}
E_\eps(u_{\eps}):=\int\limits_De_{\eps}(u_{\eps}) \,dx, \quad e_{\eps}(u_{\eps})=\frac{1}{2}|\nabla u_{\eps}|^2+\frac{1}{4\eps^2}(1-|u_{\eps}|^2)^2.
\end{equation}
The positive parameter $\eps$ is supposed to be small; we consider $\eps\in(0,1)$. The vector field~$\F=\F(x):D\to \R^{2}$ is suitably regular (at least $C^{3,\theta}$) and is compactly supported in the domain~$D$. We write $\F=(F^{1}, F^{2})$ and we identify~$u_{\eps}$ with an~$\R^{2}$-valued function~$(u_{\eps}^{1}, u_{\eps}^{2})$. The convective derivative associated to $\F$ is again a two-dimensional vector field, given by $(\F\cdot\nabla) u_{\eps}=(F^{k}\partial_{k} u_{\eps}^{1}, F^{k}\partial_{k} u_{\eps}^{2})$. The process $B_t$ is the standard one-dimensional Brownian motion. The equation is formulated in the Stratonovich sense.

We are particularly interested in the asymptotic behavior of $u_{\eps}$ for $\eps$ converging to zero. We focus on the energy regime 
\begin{equation}\label{Intro:EnergyWithVortices}
E_\eps(u_\eps)\leqslant C\log(1/\eps).
\end{equation}
Under~\eqref{Intro:EnergyWithVortices} and in the absence of noise, the asymptotic behavior of~$u_{\eps}$ is governed by the Ginzburg-Landau vortices of~$u_{\eps}$. Those are point singularities of~$u_{\eps}$ -- zeroes of~$u_{\eps}$ around which $u_{\eps}/|u_{\eps}|$ has a non-trivial winding number -- which persist in the limit $\eps\to 0$. In other words, vortices are finitely many distinct points in~$D$, each carrying a quantized topological charge. The energy concentrates at the vortex cores with a rate proportional to~$\pi\log(1/\eps)$. 

The vortex positions are best identified via the Jacobian of of $u_{\eps}$, 
\[J(u_{\eps}):=\det \nabla u_{\eps}.\]
If \eqref{Intro:EnergyWithVortices} holds, the family~$(J(u_{\eps}))$ is relatively compact in the space $(C_{0}^{0,\alpha}(D))^{*}$ with respect to the strong topology~\cite{JerrardSoner}. This space is the dual of the space of $\alpha$-H\"{o}lder continuous functions on $D$ that are zero on $\partial D$. Moreover, for any convergent subsequence we have that
\begin{equation}\label{Intro:ConvergenceOfJacobian}
J(u_{\eps})\xrightarrow[\eps\to 0]{} \pi\sum\limits_{k=1}^{N}d_{k}\delta_{a_{k}}.
\end{equation}
Here, $a_{k}$ is the position of the $k$th vortex and $d_{k}\in\Z$ is its topological charge (\textit{degree}). The number and degrees of the vortices are to some extent controlled by the energy: we have that 
$\pi\sum_{k=1}^{N}|d_{k}|\leqslant \sup_{\eps\in(0,1)}\log(1/\eps)^{-1}E_{\eps}(u_{\eps})$.

The estimate \eqref{Intro:EnergyWithVortices} immediately implies that the family of the \textit{rescaled energy densities }
\begin{equation}\label{eq:definitionOfRescaledEnergyDensity}
\mu_{\eps}(u_{\eps}):=(\log(1/\eps))^{-1}e_{\eps}(u_{\eps})
\end{equation}
is relatively compact in $(C_0^{0,\alpha}(D))^*$. 
However, without further assumptions we only have that 
$\mu_{\eps}(u_{\eps})$ converges to the measure~$\pi\sum\nolimits_{k=1}^{N}d_{k}^{2}\delta_{a_{k}}+\mu_{0},
$
with an unspecified $\mu_{0}$. Hence, the Jacobians indeed provide the most complete and natural description of the vortices. 

In this work, we focus on the Jacobians of the solution to~\eqref{SGL_Strat}. They are stochastic processes with values in certain spaces of measures. We show that the family of random measures $(J(u_{\eps}(t)))_{\eps}$ is tight for every positive~$t$. More importantly, we prove that its limit points are concentrated on the closure of the set 
 \[\Set{\pi\sum\nolimits_{k=1}^{N}d_{k}\delta_{a_{k}} | N\in \N, d_{k}\in \Z, a_{k}\in D}.
\]
This serves as the definition of the stochastic Ginzburg-Landau vortices. The vortices are the random points $a_{k}$ in the formula above.

We now state our results formally.
First, we prove existence and uniqueness of the solution to~\eqref{SGL_Strat}. We consider either Dirichlet or zero Neumann boundary conditions. 
\begin{theorem}\label{existence}
Suppose that for every $\eps$ in $(0,1)$, the initial data $u_{\eps}^{0}$ is deterministic and smooth. Then, for every~$\eps$ in~$(0,1)$ and every $T>0$, the initial-boundary value problem associated to \eqref{SGL_Strat} has a unique stochastically strong solution on $[0,T]$. The solution~$u_{\eps}(t, \cdot)$ is a continuous $C^{3, \beta}(\overbar D)$-valued process, for some $\beta\in(0,1)$.
\end{theorem}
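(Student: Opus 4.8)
The plan is to remove the transport noise by composing $u_\eps$ with the stochastic flow generated by $\F$, which turns~\eqref{SGL_Strat} into a deterministic semilinear parabolic problem with random coefficients, to be solved pathwise by classical Schauder theory and a maximum principle. (As a sanity check one may pass to the It\^o form: the Stratonovich correction of $(\F\cdot\nabla)u_\eps\circ dB_t$ is $\tfrac12(\F\cdot\nabla)^2u_\eps\,dt$, the principal symbol of $\log(1/\eps)\Delta+\tfrac12(\F\cdot\nabla)^2$ is $-\log(1/\eps)|\xi|^2-\tfrac12(\F\cdot\xi)^2$ and the squared noise symbol is $(\F\cdot\xi)^2$, so the stochastic parabolicity condition holds with uniform gap $2\log(1/\eps)|\xi|^2$ — which is exactly why the Stratonovich formulation is the natural one here.)

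Let $\Phi_s$ denote the deterministic flow of the autonomous ODE $\dot\Phi=\F(\Phi)$; since $\F\in C^{3,\theta}$ is compactly supported in~$D$, each $\Phi_s$ ($s\in\R$) is a $C^{3,\theta'}(\overbar D)$-diffeomorphism equal to the identity near $\partial D$ (any $\theta'<\theta$), and $s\mapsto\Phi_s$ is continuous in $C^{3,\theta'}$ with norms bounded on compacts. Put $\phi_t:=\Phi_{-B_t}$; by the Stratonovich chain rule this solves $d\phi_t=-\F(\phi_t)\circ dB_t$, $\phi_0=\mathrm{id}$, it is adapted, and $t\mapsto\phi_t,\phi_t^{-1}$ inherit the spatial regularity and $t$-continuity above. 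Define $v(t,y):=u_\eps(t,\phi_t(y))$, so that $u_\eps(t,x)=v(t,\phi_t^{-1}(x))$. The It\^o--Wentzell formula for the composition of the random field $u_\eps$ with the flow $\phi_t$ makes the $\circ\,dB_t$ terms cancel, so that $v$ solves, for almost every $\omega$, the pathwise problem
\begin{equation*}
\partial_t v=\log(1/\eps)\Big(A^{kl}_t(y)\,\partial_{kl}v+b^k_t(y)\,\partial_k v+\tfrac{1}{\eps^2}(1-|v|^2)v\Big),\qquad v(0)=u_\eps^0,
\end{equation*}
with the same boundary condition as $u_\eps$, where $A_t=(D\phi_t)^{-1}(D\phi_t)^{-\top}$ is symmetric and uniformly positive definite on $[0,T]$, $A_t,b_t\in C^{2,\theta'}$ in space and continuous in $t$, and the linear part acts componentwise on $v=(v^1,v^2)$; the cubic term contains no derivatives and is unaffected by the substitution.

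I would then solve this deterministic problem for almost every fixed $\omega$. Local existence and uniqueness of a classical solution, regular in space (after bootstrapping, of class $C^{3,\beta}$ since $A_t,b_t\in C^{2,\theta'}$ and $u_\eps^0$ is smooth), follow from the linear parabolic Schauder estimates and a contraction argument, the nonlinearity being smooth. Global existence on $[0,T]$ follows from an a priori $L^\infty$ bound: from $\tfrac12\partial_t|v|^2=\log(1/\eps)\big(v\cdot(A_t\!:\!D^2v)+\tfrac12 b_t\cdot\nabla|v|^2+\eps^{-2}(1-|v|^2)|v|^2\big)$ and $v\cdot(A_t\!:\!D^2v)\le\tfrac12 A_t\!:\!D^2|v|^2$ (since $A_t\ge0$) one gets $\partial_t|v|^2\le\log(1/\eps)\big(A_t\!:\!D^2|v|^2+b_t\cdot\nabla|v|^2+2\eps^{-2}(1-|v|^2)|v|^2\big)$, whose reaction term is nonpositive where $|v|\ge1$; the maximum principle then gives $\|v(t)\|_{L^\infty}\le\max(1,\|u_\eps^0\|_{L^\infty})$ on $[0,T]$. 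This controls the nonlinearity and all higher norms, the solution extends to $[0,T]$, and $t\mapsto v(t)$ is continuous with values in $C^{3,\beta}(\overbar D)$. Transforming back, $u_\eps(t,\cdot)=v(t,\phi_t^{-1}(\cdot))$ is $C^{3,\beta}(\overbar D)$-valued and continuous in $t$; $\phi_t$ is adapted and $v$ is built pathwise from adapted data, so $u_\eps(t)$ is $\mathcal F_t$-measurable and $u_\eps$ is a stochastically strong solution of~\eqref{SGL_Strat}. For uniqueness, any $C^{3,\beta}$-valued solution of~\eqref{SGL_Strat} is regular enough for the It\^o--Wentzell computation to run in reverse, so its transform solves the deterministic problem above, which has a unique solution.

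The step I expect to be the main obstacle is the rigorous It\^o--Wentzell argument: justifying the exact cancellation of the noise, and its consistency with the $\tfrac12(\F\cdot\nabla)^2$ correction, for the $C^{3,\beta}$-valued semimartingale $u_\eps$ composed with the flow $\phi_t$, together with the bookkeeping that keeps track of the spatial regularity of $\phi_t$ up to $\partial D$ and its continuity in $t$. If one prefers to stay within SPDE theory, an alternative is to truncate the cubic nonlinearity to a globally Lipschitz one, solve the truncated equation by the variational or stochastic-$L^p$ methods (uniqueness from energy estimates), recover the $L^\infty$ bound by the same maximum-principle computation performed stochastically — where the transport noise contributes nothing at a spatial extremum of $|u_\eps|^2$ — and then remove the truncation; the flow reduction is however cleaner and closer to the geometric nature of the equation.
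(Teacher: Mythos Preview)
Your proposal is correct and follows essentially the same route as the paper: compose with the Stratonovich flow $\phi_t=\Phi_{-B_t}$ of $\F$ (the paper's $\varphi_{0,t}$, defined via Kunita's theory and shown to equal $\Phi(\cdot,B_t)$ in a remark) to obtain a pathwise semilinear parabolic problem with $C^{2,\theta'}$ random coefficients, then solve it by Schauder theory together with the maximum principle on $|v|^2$ for the global $L^\infty$ bound, and transform back. The only cosmetic differences are that the paper invokes Kunita's Lemma~6.2.3 in place of your It\^o--Wentzell computation and uses a Leray--Schauder fixed-point argument (\`a la Lady\v{z}enskaja) rather than local contraction plus continuation; your identification of the It\^o--Wentzell cancellation as the delicate step is exactly what the citation to Kunita covers.
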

We shall recall the definition of the strong solution in Section 2.3. In the same section, we specify the assumptions on the initial and boundary data and on the field $\F$. Theorem~\ref{existence} guarantees that $u_{\eps}$ is regular enough to have well-defined vortices. Our main results concern these vortices. 
We formulate them in the following two theorems.
\begin{theorem}\label{tightness}
Let $(u_{\eps}(t))$ be the family of solutions to~\eqref{SGL_Strat} provided by Theorem~\ref{existence}. Then, for every $t\in[0,T]$ and for every~$\alpha\in(0, 1]$, the families of the Jacobians~$(J(u_\eps(t)))_{\eps}$ and of the rescaled energy densities~$(\mu_\eps(u_\eps(t)))_{\eps}$ are tight on the space $(C_0^{0,\alpha}(D))^*$\!.  
\end{theorem}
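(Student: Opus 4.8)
The plan is to reduce tightness of the random measures to a uniform-in-$\eps$ moment bound on the Ginzburg-Landau energy, and then invoke the classical compactness of Jerrard-Soner for deterministic configurations. Concretely, since both $J(u_\eps(t))$ and $\mu_\eps(u_\eps(t))$ are controlled pathwise by $(\log(1/\eps))^{-1}E_\eps(u_\eps(t))$ through the estimates $\|J(u_\eps)\|_{(C_0^{0,\alpha})^*}\lesssim (\log(1/\eps))^{-1}E_\eps(u_\eps)$ and $\|\mu_\eps(u_\eps)\|_{(C_0^{0,\alpha})^*} = (\log(1/\eps))^{-1}E_\eps(u_\eps)$ (the latter because $\mu_\eps\geq 0$, so its mass is the pairing with the constant function, up to boundary corrections — here one uses $\alpha\le 1$ and a cutoff, or simply works with the total variation), it suffices to show that there is a constant $C = C(T)$, independent of $\eps$, with
\begin{equation*}
\MExp\!\left[\sup_{t\in[0,T]} (\log(1/\eps))^{-1} E_\eps(u_\eps(t))\right]\leqslant C.
\end{equation*}
Given such a bound, Chebyshev's inequality shows that for each $t$ the laws of $(\log(1/\eps))^{-1}E_\eps(u_\eps(t))$ are tight in $[0,\infty)$, hence for every $\eta>0$ there is $R_\eta$ with $\pP(\|J(u_\eps(t))\|_{(C_0^{0,\alpha})^*}\leqslant R_\eta)\geq 1-\eta$ uniformly in $\eps$; since bounded sets of $(C_0^{0,\alpha})^*$ are relatively compact in $(C_0^{0,\alpha'})^*$ for $\alpha'>\alpha$ (by Arzel\`a-Ascoli on the predual and the fact that $C_0^{0,\alpha'}$ embeds compactly into $C_0^{0,\alpha}$ — one runs the argument with an auxiliary exponent), tightness on $(C_0^{0,\alpha})^*$ follows. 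The same reasoning applies verbatim to $\mu_\eps(u_\eps(t))$.

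The substance of the proof is therefore the energy estimate, and this is where the stochastic structure enters. The strategy is to apply the It\^o formula to $t\mapsto E_\eps(u_\eps(t))$ along the solution. In the deterministic gradient-flow case the drift contributes $-\log(1/\eps)\int_D |\partial_t u_\eps|^2/\log(1/\eps)\,dx\leq 0$, i.e. energy dissipation. The Stratonovich noise $(\F\cdot\nabla)u_\eps\circ dB_t$ is the generator of the flow of diffeomorphisms of $D$ induced by $\F$; because $E_\eps$ is \emph{not} invariant under this transport (the domain of integration and the gradient transform), the noise produces a genuine contribution. The key point, to be exploited, is that transport by a compactly supported, smooth, divergence-controlled vector field changes the energy only by a bounded factor: there is a constant $\Lambda = \Lambda(\F)$ with $|\mathcal L_\F e_\eps(u_\eps)|\lesssim \Lambda\, e_\eps(u_\eps)$ pointwise after integration by parts (the $1/\eps^2$ potential term is handled because $\F$ acts by transport, not by amplitude, so no negative power of $\eps$ is generated — this is the whole reason for choosing gradient-type noise). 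Consequently the It\^o differential of $E_\eps(u_\eps(t))$ has the form
\begin{equation*}
d E_\eps(u_\eps) = -\,(\text{dissipation})\,dt + b_\eps(t)\,E_\eps(u_\eps)\,dt + \sigma_\eps(t)\,E_\eps(u_\eps)\,dB_t,
\end{equation*}
with $|b_\eps|,|\sigma_\eps|\leqslant C(\F)$ uniformly in $\eps$ (the It\^o correction term from converting Stratonovich to It\^o, and the quadratic variation term, are of the same transport type and hence also $O(\Lambda^2)$ relative to the energy). This is a linear SDE-type inequality for $Y_\eps(t):=E_\eps(u_\eps(t))$: dropping the nonpositive dissipation, $dY_\eps\leq C Y_\eps\,dt + \sigma_\eps Y_\eps\,dB_t$, so by a stochastic Gronwall lemma (or directly, by writing $Y_\eps(t)\leq Y_\eps(0)\,\mathcal E(t)$ with $\mathcal E$ a stochastic exponential of a martingale with bounded quadratic variation) one obtains $\MExp[\sup_{[0,T]}Y_\eps(t)]\leq C(T,\F)\,Y_\eps(0)$, and $Y_\eps(0) = E_\eps(u_\eps^0)\leq C\log(1/\eps)$ by the hypothesis on the initial data. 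Dividing by $\log(1/\eps)$ gives the desired bound.

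The main obstacle I anticipate is making the It\^o formula for $E_\eps(u_\eps(t))$ rigorous: $E_\eps$ is a nonlinear functional on an infinite-dimensional space, the solution is only a $C^{3,\beta}(\overbar D)$-valued process, and one must justify the chain rule for the Stratonovich equation (equivalently, control all the It\^o correction terms arising from $d\langle u_\eps\rangle_t$ and from the second variation of $E_\eps$). The cleanest route is to convert \eqref{SGL_Strat} to It\^o form, where the Stratonovich-to-It\^o correction is $\tfrac12(\F\cdot\nabla)\big((\F\cdot\nabla)u_\eps\big)\,dt$, and then apply a finite-dimensional-Galerkin or regularization argument to legitimize the energy identity, passing to the limit using the regularity from Theorem~\ref{existence}. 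One must also check carefully that all the transported/corrected terms really are bounded multiples of $e_\eps(u_\eps)$ with $\eps$-independent constants — in particular that no factor of $\log(1/\eps)$ or $1/\eps^2$ survives in $b_\eps$ or $\sigma_\eps$ — which comes down to integrating by parts so that derivatives of $\F$ (not of $u_\eps$) carry the loss, together with the compact support and $C^{3,\theta}$ regularity of $\F$ and the boundary conditions (the boundary terms vanish since $\F$ is compactly supported in $D$). The boundary behavior at $\partial D$ (Dirichlet vs. Neumann) plays no role precisely because $\supp\F\Subset D$.
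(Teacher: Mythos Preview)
Your proposal is correct and follows essentially the same route as the paper: It\^o formula for $E_\eps(u_\eps(t))$, an integration-by-parts computation showing that the Stratonovich correction and second-variation terms are bounded by $C(\F)\,E_\eps$ with no residual powers of $\eps$, a Gronwall argument, and then the Jerrard--Soner decomposition $J(u_\eps)=J_0^\eps+J_1^\eps$ applied $\omega$-pointwise, combined with Chebyshev and the compact embeddings $L^1\Subset (C_0^{0,\alpha})^*$ (for $\mu_\eps$) and $(C_0^{0,\alpha_1})^*\Subset(C_0^{0,\alpha_2})^*$ for $\alpha_1<\alpha_2$ (for $J(u_\eps)$).

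One minor simplification relative to your outline: since the theorem asserts tightness for each \emph{fixed} $t\in[0,T]$, you do not need the uniform-in-time bound $\MExp\big[\sup_{t\in[0,T]}\mathcal E_\eps(t)\big]\leqslant C$ via a stochastic Gronwall lemma; it suffices to take expectation in the energy identity (the It\^o integral is a martingale, hence has zero mean) and apply the ordinary Gronwall inequality to $t\mapsto\MExp[\mathcal E_\eps(t)]$, yielding $\sup_{t\in[0,T]}\MExp[\mathcal E_\eps(t)]\leqslant e^{K_1T}\mathcal E_\eps(0)$. This is what the paper does, and it avoids the extra work of controlling the stochastic exponential or invoking BDG. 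Also, the It\^o formula is applied directly to the $C^{3,\beta}$-valued solution from Theorem~\ref{existence} (third derivatives of $u_\eps$ appear but exist classically), so no Galerkin approximation is needed.
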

Theorem~\ref{tightness} is the stochastic counterpart of the statement about the relative compactness of $(\mu_\eps(u_\eps))_{\eps}$ and $(J(u_\eps))_{\eps}$. Indeed, for the families of random variables, the relevant property is the weak relative compactness. It follows from the tightness, by virtue of the Prokhorov theorem. 

According to Theorem~\ref{tightness}, for every sequence $\eps_{n}\to 0$, the sequences $(J(u_{\eps_{n}}(t)))_{\eps_{n}}$ and~$(\mu_{\eps_{n}}(u_{\eps_{n}}(t)))_{\eps_{n}}$ weakly subconverge on the space $(C_0^{0,\alpha}(D))^*$. The limiting objects are two probability measures on  $(C_0^{0,\alpha}(D))^*$. We are able to characterize these measures. We now set the stage in order to present our last result.

We apply the Skorokhod representation theorem to the sequences $(J(u_{\eps_{n}}(t)))_{\eps_{n}}$ and $(\mu_{\eps_{n}}(u_{\eps_{n}}(t)))_{\eps_{n}}$. We can not do this on the space~$(C_0^{0,\alpha}(D))^*$, because it is not separable. However, it is continuously embedded into a separable space $W^{-1,q}$ for $q=2/(1+\alpha)$. The latter is the topological dual of the Sobolev space $W_{0}^{1,p}$ with~$1/p+1/q=1$. Now suppose that for $\eps_{n}\to 0$, the sequences $(J(u_{\eps_{n}}(t)))_{\eps_{n}}$ and $(\mu_{\eps_{n}}(u_{\eps_{n}}(t)))_{\eps_{n}}$ converge weakly on~$W^{-1,q}$, with limiting probability measures $\mathcal P_{J}$ and~$\mathcal P_{\mu}$. By the Skorokhod representation theorem, there exists a probability space $(\tilde \Omega, \mathfrak{\tilde F}, \tilde \pP)$ and on it, $W^{-1,q}$-valued sequences~$(\tilde J_{n})$, $(\tilde \mu_{n})$ with the following properties. The elements in the new sequences have the same laws as the corresponding elements in the original sequences. Namely, for every~$n\in \N$, we have $\mathcal L(\tilde J_{n})=\mathcal L( J(u_{\eps_{n}}(t)))$ and $\mathcal L(\tilde \mu_{n})=\mathcal L(\mu_{\eps_{n}}(u_{\eps_{n}}(t)))$, with $\mathcal L(\xi)$ denoting the law of a random variable $\xi$. The sequences converge $\tilde \pP$-almost surely in the topology of~$W^{-1,q}$ to the random variables~$\tilde J$ and~$\tilde \mu$, respectively. The laws of the limits are $\mathcal L(\tilde J)=\mathcal P_{J}$ and $\mathcal L(\tilde \mu)=\mathcal P_{\mu}$. We have detailed information on the structure of $\tilde J$ and~$\tilde \mu$.
\begin{theorem}\label{structure}
For $\tilde \pP$-almost all $\tilde\omega$, we have that
\[\tilde J(\tilde\omega)=\pi\sum_{k=1}^{N(\tilde \omega)}d_{k}(\tilde \omega)\delta_{a_{k}(\tilde \omega)}
\]
with $N(\tilde \omega)\in \N$, $d_{k}(\tilde \omega)\in \Z$ and $a_{k}(\tilde \omega)\in D$. 

For $\tilde \pP$-almost all $\tilde \omega$, $\tilde J(\tilde \omega)\ll \tilde \mu(\tilde \omega)$ as measures on $D$ and $\big|\frac{d\tilde J(\tilde \omega)}{d\tilde \mu(\tilde \omega)}(x)\big|\leqslant 1$, for $\tilde \mu(\tilde \omega)$-almost all $x$ in $D$. 

Finally,
\begin{equation}\label{eq:ExpectationOfNumberOfVortices}
\tilde\MExp\big[\pi \sum\nolimits_{k=1}^{N(\tilde \omega)}|d_{k}(\tilde \omega)|\big]\leqslant \sup_{n}\MExp\big[|\log\eps_{n}|^{-1}E_{\eps_{n}}(u_{\eps_{n}}(t))\big]\leqslant C,
\end{equation}
for a constant $C$ that is independent of $\eps$.
\end{theorem}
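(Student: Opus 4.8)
The plan is to transfer the known deterministic structure theory for Ginzburg--Landau Jacobians—specifically the Jerrard--Soner compactness result and Struwe-type lower energy bounds cited around~\eqref{Intro:ConvergenceOfJacobian}—to the $\tilde\pP$-almost sure setting provided by the Skorokhod representation. The starting observation is that on the new probability space we have, for every fixed $n$, realizations $\tilde J_n(\tilde\omega)$ and $\tilde\mu_n(\tilde\omega)$ whose joint law coincides with that of $(J(u_{\eps_n}(t)),\mu_{\eps_n}(u_{\eps_n}(t)))$; in particular the pointwise-in-$\eps$ identities and inequalities that hold deterministically for genuine Ginzburg--Landau fields continue to hold $\tilde\pP$-almost surely for the $\tilde J_n,\tilde\mu_n$. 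The subtlety is that $\tilde J_n$ is \emph{not} literally the Jacobian of a recovered field $\tilde u_n$ on the new space, so I would first enlarge the Skorokhod construction: couple in also the laws of $u_{\eps_n}(t)$ itself (as a random variable with values in $C^{3,\beta}(\overbar D)$, or at least in $H^1(D)$ with the energy functional), so that on $(\tilde\Omega,\mathfrak{\tilde F},\tilde\pP)$ one obtains fields $\tilde u_n$ with $\tilde J_n=J(\tilde u_n)$, $\tilde\mu_n=\mu_{\eps_n}(\tilde u_n)$, and $\mathcal L(\tilde u_n,\tilde J_n,\tilde\mu_n)=\mathcal L(u_{\eps_n}(t),J(u_{\eps_n}(t)),\mu_{\eps_n}(u_{\eps_n}(t)))$, with $\tilde J_n\to\tilde J$ and $\tilde\mu_n\to\tilde\mu$ in $W^{-1,q}$ a.s.

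Next, I would argue fiberwise. Fix $\tilde\omega$ outside a $\tilde\pP$-null set. The energy bound from the a priori estimates underlying Theorems~\ref{existence} and~\ref{tightness} gives $\sup_n|\log\eps_n|^{-1}E_{\eps_n}(\tilde u_n(\tilde\omega))<\infty$—this can be arranged along a further subsequence using that the left-hand side has finite expectation (Fatou / uniform integrability), or more cleanly by invoking that $|\log\eps_n|^{-1}E_{\eps_n}(u_{\eps_n}(t))$ is bounded in $L^1(\Omega)$ and passing to a subsequence converging a.s.\ to a finite limit. On this event, the deterministic Jerrard--Soner theorem applies to the sequence $(\tilde u_n(\tilde\omega))$: up to a subsequence, $J(\tilde u_n(\tilde\omega))\to\pi\sum_{k=1}^{N}d_k\delta_{a_k}$ strongly in $(C^{0,\alpha}_0(D))^*$, hence a fortiori in $W^{-1,q}$. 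Since we already know $\tilde J_n(\tilde\omega)\to\tilde J(\tilde\omega)$ in $W^{-1,q}$ for the full sequence, uniqueness of limits forces $\tilde J(\tilde\omega)=\pi\sum_{k=1}^{N(\tilde\omega)}d_k(\tilde\omega)\delta_{a_k(\tilde\omega)}$, which is the first assertion. The measurability of $N,d_k,a_k$ in $\tilde\omega$ follows since the atoms and weights are determined by $\tilde J$ as an element of the Polish space $W^{-1,q}$ (e.g.\ $N=\#\supp\tilde J$, etc.), but I would only remark on this.

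For the second assertion I would use the standard lower-bound inequality relating the Jacobian to the energy density at the $\eps$-level: for Ginzburg--Landau fields one has, as measures (or tested against nonnegative $\phi\in C^0_0$), an estimate of the form $|J(u_\eps)| \lesssim \mu_\eps(u_\eps)$ in the limit—more precisely, for any convergent subsequence, the limiting Jacobian measure $\nu_J$ and limiting energy measure $\nu_\mu$ satisfy $|\nu_J|\leqslant \nu_\mu$ and $\nu_J\ll\nu_\mu$ with Radon--Nikodym derivative bounded by $1$ (this is again in Jerrard--Soner / Jerrard's work and is what yields $\pi\sum|d_k|\delta_{a_k}\leqslant \nu_\mu$). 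Applying this fiberwise on the same full-measure event gives $\tilde J(\tilde\omega)\ll\tilde\mu(\tilde\omega)$ and $|d\tilde J(\tilde\omega)/d\tilde\mu(\tilde\omega)|\leqslant 1$ $\tilde\mu(\tilde\omega)$-a.e. Finally, for~\eqref{eq:ExpectationOfNumberOfVortices}: fiberwise the deterministic degree bound gives $\pi\sum_{k=1}^{N(\tilde\omega)}|d_k(\tilde\omega)|\leqslant \liminf_n |\log\eps_n|^{-1}E_{\eps_n}(\tilde u_n(\tilde\omega))$; taking $\tilde\MExp$ and using Fatou's lemma together with the law-equality $\tilde\MExp[|\log\eps_n|^{-1}E_{\eps_n}(\tilde u_n)]=\MExp[|\log\eps_n|^{-1}E_{\eps_n}(u_{\eps_n}(t))]$ yields the bound by $\sup_n\MExp[|\log\eps_n|^{-1}E_{\eps_n}(u_{\eps_n}(t))]$, which is $\leqslant C$ by the energy estimate established in the course of proving Theorem~\ref{tightness}.

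I expect the main obstacle to be the first step—promoting the Skorokhod coupling of the \emph{measure-valued} variables $\tilde J_n,\tilde\mu_n$ to a coupling that also carries the underlying fields $\tilde u_n$ with the correct joint law, so that the deterministic structure theorems can be invoked pathwise. This requires checking that $u_{\eps_n}(t)$, together with its Jacobian and energy density, can be viewed as a random variable with values in an appropriate Polish space (e.g.\ $H^1(D)\times W^{-1,q}\times W^{-1,q}$, using Theorem~\ref{existence} for the needed regularity and measurability of the solution map), and that the joint laws are tight so that Skorokhod applies to the triple. A secondary technical point is the passage to a common subsequence along which the random energy $|\log\eps_n|^{-1}E_{\eps_n}(u_{\eps_n}(t))$ converges $\tilde\pP$-a.s.\ to a finite limit; this is where the uniform-in-$\eps$ $L^1$-bound on the energy (an ingredient of Theorem~\ref{tightness}) is essential, and Fatou's lemma then does the rest for~\eqref{eq:ExpectationOfNumberOfVortices}.
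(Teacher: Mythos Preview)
Your approach is essentially the paper's: couple in the underlying fields $u_{\eps_n}(t)$ via Skorokhod, recover $\tilde J_n = J(\tilde u_n)$ and $\tilde\mu_n = \mu_{\eps_n}(\tilde u_n)$ on the new space, apply Jerrard--Soner pathwise along an $\tilde\omega$-dependent subsequence on which the rescaled energies stay bounded, and close with Fatou for~\eqref{eq:ExpectationOfNumberOfVortices}.

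The one step that would fail as written is your choice of space for the fields in the enlarged Skorokhod coupling. There is no tightness of $(u_{\eps_n}(t))$ in $H^1$ or $C^{3,\beta}$: the energy bound only gives $\lVert\nabla u_{\eps_n}(t)\rVert_{L^2}^2 \leqslant C\log(1/\eps_n)$, which blows up. The paper instead places $u_{\eps_n}(t)$ in the \emph{same} space $W^{-1,q}$ as the Jacobians and energy densities; tightness there follows from the uniform bound $\sup_n\MExp\big[\lVert u_{\eps_n}(t)\rVert_{L^2}^2\big]\leqslant C$ (a consequence of the energy estimates) together with the compact embedding $L^2\Subset W^{-1,q}$. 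Since the Skorokhod copies $\tilde u_n$ are then a priori only $W^{-1,q}$-valued, an additional Kuratowski-theorem argument is needed: Borel subsets of $H^1$ are Borel in $W^{-1,q}$, so $\tilde\pP\{\tilde u_n\in H^1\}=\pP\{u_{\eps_n}(t)\in H^1\}=1$, and since $u\mapsto\det\nabla u$ is continuous from $H^1$ to $W^{-1,q}$, equality of laws gives $\tilde J_n=\det\nabla\tilde u_n$ and $\tilde\mu_n=k_{\eps_n}e_{\eps_n}(\tilde u_n)$ $\tilde\pP$-a.s. A minor second point: only $\liminf_n\mathcal E_{\eps_n}(\tilde u_n(\tilde\omega))<\infty$ is available $\tilde\pP$-a.s.\ (from Fatou applied to $\tilde\MExp[\mathcal E_{\eps_n}(\tilde u_n)]\leqslant C$), not $\sup_n$; this suffices, since Jerrard--Soner only needs the energy bound along a subsequence, and the full-sequence convergence $\tilde J_n(\tilde\omega)\to\tilde J(\tilde\omega)$ in $W^{-1,q}$ then identifies the limit.
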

Theorem~\ref{structure} shows that the definition of the vortices in terms of the Jacobian remains consistent in the stochastic case. In the limit, the Jacobians do produce a random set of points with integer weights assigned to them. This result is the very first step towards the study of the vortex dynamics in the stochastic setting.

In the deterministic setting, the Ginzburg-Landau vortices have been extensively studied since early 1990es. Their behavior is quite well understood by now. To a collection of vortices, we associate the renormalized energy~$W(a_{1}, ..., a_{N})$. Essentially, $W$ is the Kirchhoff-Onsager energy of a vortex configuration in an incompressible fluid, cf.~\cite{MarchioroPulvirenti}, Chapter~4. If $u_{\eps}$ solves an equation based on $E_{\eps}$, its vortices solve an equation of the same type based on~$W$. For instance, the vortex configuration of 
a minimizer of~$E_{\eps}(u_{\eps})$ minimizes $W(a_{1}, ..., a_{N})$~\cite{BBH}. For the accelerated gradient flow of $E_{\eps}$, the vortex dynamics is governed by the gradient flow of $W$~\cite{E,LinHeat,JS_Dynamics,SSProd}. Similar facts have been proven for the Hamiltonian~\cite{Neu,CollianderJerrard} and the mixed gradient-Hamiltonian~\cite{KMMS,Miot} flows of the Ginzburg-Landau energy. Note that all these results are obtained for vortices of degrees~$\pm 1$. One needs this additional assumption because vortices of higher degree are believed to be unstable. Since the energy of a vortex scales quadratically in degree, it is energetically more favorable to have  many vortices of degree $\pm1$ than one of a higher degree. Still, a splitting of a vortex can not be adequately described by the methods existing so far. In this work, we do not derive the vortex motion law for~\eqref{SGL_Strat} and therefore we do not make any restriction on the vortex degrees.

The Jacobian is the crucial ingredient of the arguments that back the results on the vortex dynamics. Moreover, the use of the Jacobian has been considerably extended in the time-dependent case. The suitable object here is the \textit{total Jacobian}~\cite{SSProd,SSGamma}. It comprises both the usual Jacobian and the analogous quantities that depend on the time-derivative of $u_{\eps}$. The total Jacobian is compact if both the Ginzburg-Landau energy and the kinetic energy of $u_{\eps}$ are of the order $\log(1/\eps)$. The total Jacobian concentrates on the \textit{vortex paths} in the space $C([0, T], D)$. Therefore, it provides information on the vortex dynamics as a whole. This compactness result also gives some information on the regularity of the vortex trajectories. The actual equation on these trajectories is obtained as follows. From the equation on $u_{\eps}$, we derive the evolution equation on either $J(u_{\eps})$ or $\mu_{\eps}(u_{\eps})$. When we send $\eps$ to zero in these intermediate equations, we get the vortex motion law.

It is not clear, what could be the counterpart of the total Jacobian in the stochastic setting. In the construction of the total Jacobian, the time is treated essentially as a one more spatial variable. Furthermore, the time-derivative~$\partial _{t}u_{\eps}$ plays an independent and important role in this regard. The kinetic energy is the $L^{2}$-norm of $\partial _{t}u_{\eps}$. The stochastic situation is quite different. First, the time variable is utterly distinct from the spatial variables. Second, the time-derivative of the solution does not have a meaning on its own. 

In~\cite{Tch}, we derive the vortex motion law for a \textit{non-randomly perturbed} mixed Ginzburg-Landau flow. We fully understand the effect of the convective forcing in this case. We can adapt our conclusions to the slightly simpler case of the forced gradient flow of~$E_{\eps}$. We perturb the gradient flow by the convective term $(\gG\cdot\nabla)u_{\eps}$ with a smooth vector field~$\gG=\gG(x,t)$. The resulting equation on $u_{\eps}$ is the deterministic counterpart of \eqref{SGL_Strat}. For it, the motion of the vortices is governed by the forced gradient flow of $W$. The forcing is equal to~$\gG(a_{k}(t), t)$. The external field $\gG$ is thus transferred directly into the equation on the vortices. 

It would be very interesting to derive the vortex motion law for \eqref{SGL_Strat}. By analogy with the deterministic case, we expect that the law is given by a system of stochastic ODEs 
\begin{equation}\label{eq:StochVortexMotion}
d{a}_{k}=-\frac{1}{\pi}\partial_{a_{k}}W(a_{1}, ... , a_{N})\,dt+\F(a_{k}(t))\circ dB_{t}.
\end{equation}
The system is the \textit{randomly forced} gradient flow of $W$. The complete verification of this conjecture exceeds the scope of the present work. We can, nevertheless, support our hypothesis by one elementary example. We consider~\eqref{SGL_Strat} on a flat torus and with a \textit{constant} field $\F$. In this case, the initial data must have vortices of degrees $\pm 1$. The random change of variables $x\mapsto x-\F B_t$ converts the equation~\eqref{SGL_Strat} into the deterministic parabolic Ginzburg-Landau equation. To this equation, we apply the known theory. We pass to the vortex equation and then transform the variables back. This approach indeed yields a stochastic vortex motion law of the desired form \eqref{eq:StochVortexMotion}. 

The general framework of our investigation can be described as follows. We start with a family of parabolic PDEs indexed by a small parameter $\eps$. The asymptotic behavior of the solution for $\eps$ going to~$0$ is governed by the set of topological defects of the solution. On this set the solution takes, for topological reasons, values that are strongly penalized by the associated energy functional. The set of topological defects persists as $\eps$ goes to zero. Moreover, it coincides with the concentration set of a certain quantity related to the solution. We study the topological defects of the solution to the corresponding stochastically perturbed PDEs with two goals. First, we want to show that the set has the same geometric structure as in the unperturbed case. Second, we want to show that the dynamics of the set is governed by a certain randomly forced equation. The equation is guessed from the deterministic case. 

For the stochastic Ginzburg-Landau equation, we have achieved the first goal; this is the content of Theorems~\ref{tightness} and~\ref{structure}. Regarding the second goal, we have at least a conjecture. R\"{o}ger and Weber~\cite{RW} have addressed the same problem in the context of the Allen-Cahn equation. Their results are very similar to ours. The present work is strongly inspired by~\cite{RW}. 

The Allen-Cahn equation is the one-dimensional counterpart of the parabolic Ginz\-burg-Landau equation (cf. a detailed review~\cite{BethuelOrlandiSmets}). For this equation, the unknown function $U_{\eps}$ is scalar, but the dimension of its domain can be arbitrary. The underlying energy is, up to the scaling in $\eps$, the same as \eqref{GLEnergy}. The typical energy behavior implies relative compactness of the family of the solutions itself. 
For $U_{\eps}$, the set of topological defects is a hypersurface that is identified with the concentration set of $|\nabla U_{\eps}|$. Its evolution is governed by the mean curvature flow (MCF). 

In~\cite{RW}, the Allen-Cahn equation is perturbed by the noise of the same form as in~\eqref{SGL_Strat}. The authors show that the family of the solutions is tight on the space $C([0,T], L^1(D))$, natural for this problem. Moreover, they characterize the limit of any weakly convergent sequence of the solutions. It is, for almost every $t\in[0,T]$, concentrated on the set of phase-indicator functions of bounded variation. It can thus be associated at least to random Caccioppoli sets, if not to smooth hypersurfaces. The authors provide arguments in favor of the intuition that the sets perform a stochastically perturbed MCF. 

In most aspects, our proofs adapt the reasoning of~\cite{RW} to the Ginzburg-Landau case. In particular, our conclusions are valid for the forcing of a much more general form, namely,~ 
$
\sum_{k=1}^{\infty} \F_k(x)\circ dB_t^k.
$
Here, $\F_k$ are smooth vector fields and $B_t^k$ are independent one-dimensional Brownian motions. We consider the simplest possible case of a single Brownian motion to keep our presentation transparent. 

The proof of Theorem~\ref{existence} is essentially the same as for the Allen-Cahn equation. Our proof relies on Kunita's theory of stochastic 
flows~\cite{Kunita}. We perform a change of variables that transforms \eqref{SGL_Strat} into a family of parabolic PDEs. In these PDEs, the coefficients are random, but analytically well-behaved. Therefore, the existence, uniqueness and regularity of the solution follow from the standard theory. This technique generalizes the change of variables that we use in the case of constant $\F$. The idea of studying a stochastic differential equation by transforming it to a random family of deterministic equations is quite general. In the case of stochastic ordinary differential equations, it goes back to Doss~\cite{Doss} and Sussmann~\cite{Sussmann}. In the case of stochastic PDEs, it has been used in a wide variety of settings as well (see~\cite{daPrato_Tubaro_Semilinear,Brzezniak_Capinski_Flandoli,Flandoli_Lisei,Brzezniak_ItoUMD,Filipovic_Tappe_Teichmann,Barbu_Brzezniak_Hausenblas_Tubaro}, the list is far from being complete).

Theorems \ref{tightness} and~\ref{structure} are stochastic counterparts of the analytical results~\cite{JerrardSoner} on the Jacobians. We apply these results at almost every element of the underlying probability space. The main task is to show that this application is eligible. 

In the deterministic case, the crucial step for the proof of compactness is the decomposition (\cite{JerrardSoner}, Proposition~3.2) 
\begin{equation*}
J(u_{\eps})=J_0^\eps+J_1^\eps.
\end{equation*}
The term $J_0^\eps$ is bounded uniformly in~$\eps$ with respect to the topology of~$(C_{0}^{0})^{*}$. Its $(C_{0}^{0})^{*}$-norm is controlled by~$(\log(1/\eps))^{-1}E_{\eps}(u_{\eps})$. For the term $J_1^\eps$, we obtain uniform bounds only in the distributional norm. However, its $(C_{0}^{0,1}(D))^{*}$-norm vanishes as $\eps$ goes to zero. This reflects the situation described by~\eqref{Intro:ConvergenceOfJacobian}. In~\eqref{Intro:ConvergenceOfJacobian}, the limiting object is a finite measure, but the convergence takes place in a strictly weaker norm.

The structure of the limit points is due to the relation between $J(u_{\eps})$ and $\mu_{\eps}(u_{\eps})$. The limit points of the Jacobian are supported on the set where $\mu_{\eps}(u_{\eps})$ concentrates. According to~\eqref{Intro:EnergyWithVortices}, the rescaled energy measure can concentrate only at a finite number of points. Here, it is crucial that the families $(J(u_{\eps}))_{\eps}$ and~$(\mu_{\eps}(u_{\eps}))_{\eps}$ stem from the same family of functions~$(u_{\eps})$. 

The claim of Theorem~\ref{tightness} is, as in the deterministic case, tied to the energy behavior. The key step in the proof is the derivation and thorough analysis of the It\^{o} equation on $E_\eps(u_\eps(t))$. The noise brings into the system an amount of energy that is not zero on the average. Therefore, the Ginzburg-Landau energy of the solution to~\eqref{SGL_Strat} does not decrease with time. This is new, compared to the unperturbed flows. Still, we are able to establish a probabilistic counterpart of \eqref{Intro:EnergyWithVortices}. With it, the tightness of $(J(u_{\eps}))_{\eps}$ and $(\mu_{\eps}(u_{\eps}))_{\eps}$ follows from the deterministic Ginzburg-Landau theory. This part of our proof is more straightforward than the corresponding part in~\cite{RW}. Our computations are plainer because they are done for a simpler noise. Our arguments for the tightness are softer because we work in functional spaces that do not involve time. 

The proof of Theorem~\ref{structure} is the most novel and the most subtle part of our work. The analogy with the Allen-Cahn case is not helpful here. To characterize the limiting distribution $\mathcal P_{J}$, we should reproduce the analytic proof leading to \eqref{Intro:ConvergenceOfJacobian}. In $(J(u_{\eps_{n}}(t)))_{\eps_{n}}$, we should consider the limit $\eps_{n}\to 0$ \textit{pointwise} with respect to the elements of the underlying probability space. By Theorem~\ref{tightness}, the sequence $(J(u_{\eps_{n}}(t)))_{\eps_{n}}$ converges only weakly. In order to get some pointwise convergence, we apply the Skorokhod representation theorem. As mentioned before, we can not do that in the original space, since $(C_{0}^{0,\alpha})^{*}$ is not separable. Our switching to a larger space $W^{-1,q}$ is therefore a technical matter. We obtain a new sequence $(\tilde J_{n})$ that converges almost surely on a new probability space. The distribution of $\tilde J_n$ on $W^{-1,q}$ is the same as that of $J(u_{\eps_n}(t))$, for every $n$. However, the introduction of $(\tilde J_{n})$ gives rise to a further problem. There is no guarantee that $(\tilde J_{n})$ is a sequence of \textit{Jacobians} generated by a sequence of complex functions. This is related to the fact that we can not restore a function from its Jacobian: The mapping~$u\mapsto \det \nabla u$ is not invertible. Therefore we can not apply the result of Jerrard and Soner to the sequence $(\tilde J_{n})$. We can not assure that the limit of $(\tilde J_{n})$ is a sum of weighted delta measures. A problem of this kind does not arise for the stochastic Allen-Cahn equation. To characterize the singular set of the solution, one uses an invertible function of $U_{\eps}$.  

We resolve our problem by working with the product sequence $((J(u_{\eps_{n}}(t)), u_{\eps_{n}}(t)))_{\eps_{n}}$. The energy estimates yield an uniform bound on $(u_{\eps}(t))_{\eps}$ in $L^{2}(D)$. Thus, the family of the solutions $(u_{\eps_{n}}(t))_{\eps_{n}}$ is tight on any negative order Sobolev space. In the representing sequence~$((\tilde J_{n}, \tilde u_{n}))$, we have that $\tilde J_{n}=\det\nabla \tilde u_{n}$ almost surely. This finally allows us to apply the deterministic result of~\cite{JerrardSoner} to $(\tilde J_{n})$ in a pointwise manner. From the analytic point of view, the compactness properties of $(u_{\eps}(t))_{\eps}$ are extremely weak. It is therefore especially remarkable that they turn out to be useful in the stochastic framework. 

We are aware of no other strictly mathematical work that studies the stochastic Ginz\-burg-Landau vortices. What regards the subject, the most relevant for us are the numerical simulations by Deang, Du, and Gunzburger~\cite{deang2001stochastic,GunzburgerCompPhys}. The authors are also concerned with the formation of the Ginzburg-Landau vortices in the presence of the noise. On the other hand, there are purely mathematical works by Barton-Smith~\cite{barton2004global} and Kuksin and Shirikyan~\cite{kuksin2004randomly}. 
These authors focus on the long-time behavior of the solution rather than on the vortices. They consider the variants of the stochastic Ginzburg-Landau that are similar to the perturbed mixed flow. We point out that, in all the works mentioned above, the noise is not of a convective form. Moreover, their settings always correspond to the situation of a fixed positive $\eps$. 

\section{Preliminaries}\label{preliminaries}
\subsection{Notation}

Everywhere in the text $C$ denotes a constant that is independent of $\eps$.

We sum over repeated indices.

For a random variable $\xi$, $\mathcal L(\xi)$ is the law of $\xi$.

We view the unknown function $u_{\eps}$ sometimes as complex-valued and sometimes as $\R^{2}$-valued. For complex numbers $u, v\in \C\simeq\R^{2}$, $(u, v):=u^{1}v^{1}+u^{2}v^{2}$ is the real scalar product.

The matrix $(\nabla u\otimes \nabla u)$ with the entries 
$(\nabla u\otimes \nabla u )_{jk}=(\partial_{j} u,\partial_{k} u)$ is the \textit{stress tensor} of $u$. 

We denote by 
\begin{equation}\label{rhs}
f_\eps(u):=\Delta u+(1/\eps^2)(1-|u|^2)u
\end{equation}
the negative $L^2$-gradient of the Ginzburg-Landau energy \eqref{GLEnergy}. The drift term in~\eqref{SGL_Strat} is equal to $\log(1/\eps)f_{\eps}(u_{\eps})$. 

The \textit{energy density} is given by 
\begin{equation}\label{eq:EnergyDensity}
e_{\eps}(u)=\frac{1}{2}|\nabla u|^2+\frac{1}{4\eps^2}(1-|u|^2)^2,
\end{equation}
this is the integrand in the Ginzburg-Landau energy functional \eqref{GLEnergy}. Recall that the rescaled energy density~\eqref{eq:definitionOfRescaledEnergyDensity} is given by $\mu_{\eps}(u)=k_{\eps}e_{\eps}(u)$, with  
\[k_{\eps}:=(\log(1/\eps))^{-1}.\]
In the same vein, the \textit{rescaled energy functional} is given by 
\begin{equation}\label{eq:RescaledEnergyDef}
\mathcal{E}_\eps(u):=k_\eps E_\eps(u).
\end{equation}
If $u_{\eps}(t)$ is the solution of \eqref{SGL_Strat}, we write $\mathcal{E}_\eps(t)$ instead of $\mathcal{E}_\eps(u_{\eps}(t))$.
Similarly, $\mu_{\eps}(t)$ is the rescaled energy density~$\mu_{\eps}(u_{\eps}(t))$ associated to the solution of \eqref{SGL_Strat}. We use the same symbol for the measure on $D$ defined by~$\mu_{\eps}(t)$. For a Borel set $B\subset D$, we set
\[\mu_{\eps}(t)(B):=
k_{\eps}\int\limits_{B}e_{\eps}(u_{\eps})\,dx.
\] 
Note that \[\mu_{\eps}(t)(D)=\lVert \mu_{\eps}(t)\rVert_{L^{1}}=\mathcal{E}_\eps(t).\] 

We denote by $\nnu$ the outer unit normal on the boundary of $D$. We write $D_T$ for the space-time cylinder $D\times[0,T]$ with a fixed $T>0$. The set $\overbar{D_{T}}$ is the closure of $D_{T}$.

The Sobolev space $H^{1}(D)$ consists of functions in $L^{2}(D)$ that have weak gradients again belonging to $L^{2}(D)$. The space $C^{0}(D)$ is the space of functions that are continuous on $D$. The subspace $C_{0}^{0}(D)\subset C^{0}(D)$ consists of functions that are zero on the boundary of $D$.
The space $C_t^\alpha C_x^{2\alpha}(\overbar{D_T})$, with $\alpha\in(0, 1]$, is the Banach space of functions that are continuous on~$ \overbar{D_T}$ and satisfy 
\[[f]_{\alpha}:=\sup\limits_{(x,t)\neq(y,s)}\frac{|f(x,t)-f(y,s)|}{|x-y|^{2\alpha}+|s-t|^\alpha}<\infty.
\]
The corresponding norm is given by
\[\lVert f\rVert_{C_t^\alpha C_x^{2\alpha}(\overbar{D_T})}:=\lVert f\rVert_{C^{0}(\overbar{D_T})}+[f]_{\alpha}.
\] 
The spaces $C_t^{k,\alpha}C_x^{l,\beta}(\overbar{D_T})$, $C^{k,\alpha}(\overbar D)$, and $C^{k,\alpha}_{0}(D)$ with $k, l\in \N$ and $\alpha, \beta\in (0,1]$ are defined in the same manner. The space $C^{k,\alpha}_{c}(D)$ consists of functions in $C^{k,\alpha}(\overbar D)$ that have compact support in $D$.

The space $(C_0^{0,\alpha}(D))^*$ is the dual of the space of real-valued $\alpha$-H\"{o}lder continuous functions on $D$ that are zero on $\partial D$. Here we take $\alpha\in(0, 1]$. For $a$ and $b$ in~$D$, the $(C_0^{0,1}(D))^*$-norm of 
$ \delta_{a}-\delta_{b}$ roughly corresponds to the Euclidian distance between $a$ and~$b$.

The space $W^{-1,q}(D)$ is the dual of the space $W^{1,p}_{0}(D)$ of trace-zero Sobolev functions. Here, $1/p+1/q=1$. 

For the spaces of functions defined on $D$, we shall suppress the dependence on the domain in the notation. Accordingly, we shall write $(C_0^{0,\alpha})^*$\!, $L^{p}$\!, and so on. The functions in these spaces can be either $\C$ or $\R$-valued, depending on the context.

\subsection{Solution}\label{sec:solution}

We assume that the external vector field $\F(x)$ in~\eqref{SGL_Strat} belongs to the space~$C_{c}^{3,\sigma}(D;\R^{2})$ with~$\sigma\in(0,1]$.

We complement~\eqref{SGL_Strat} with either Dirichlet 
\begin{equation}\label{Dirichlet}
u_{\eps}(x,t)|_{\partial D}=g(x)
\end{equation}
or zero Neumann 
\begin{equation}\label{Neumann}
\partial_{\nnu}u_{\eps}(x,t)|_{\partial D}=0 
\end{equation}
boundary conditions. In~\eqref{Dirichlet}, the deterministic function $g\in C^{\infty}(\partial D, \sS^{1})$ has a non-zero winding number. This gives rise to a topological constraint on the solution. Any $H^{1}$-function $u:D\to\C$ that coincides with $g$ on $\partial D$ \textit{must} have vortices in the interior of~$D$. In the Neumann case, we require that the initial data~$u_{\eps}^{0}$ has vortices. 

In both cases, we furthermore assume that the initial data 
\begin{equation}\label{InitialData}
u_{\eps}(x,0)=u_{\eps}^{0}(x)
\end{equation}
for \eqref{SGL_Strat} is deterministic and smooth ($C^{\infty}(\overbar{D})$). We assume in addition that $|u_{\eps}^{0}(x)|\leqslant 1$ in~$D$. As a consequence of~$u_{\eps}^{0}$ having vortices~\cite{SandierBallConstr}, there exists a constant $C_{0}$ such that 
\begin{equation*}
E_{\eps}(u_{\eps}^{0})\geqslant C_{0}\log(1/\eps).
\end{equation*}
We assume that the opposite inequality 
\begin{equation}\label{eq:EnergyAtTimeZero}
E_{\eps}(u_{\eps}^{0})\leqslant C\log(1/\eps)
\end{equation}
holds for $u_{\eps}^{0}$ as well. The coefficient $C$ can be strictly larger than $C_{0}$. 

An initial data that satisfies all our requirements exists for any given vortex configuration~$(a_{1},..., a_{N}; d_{1}, ..., d_{N})$, cf. \cite{JerrardSpirnRefined}, Lemma 14 and \cite{JS_Dynamics}, Remark 2.1. We can consider initial data of a special form
\[u_{\eps}^{0}(x)=u_{*}(x; a_{1},..., a_{N}; d_{1}, ..., d_{N})\cdot \prod_{k=1}^{N}f(|x-a_{k}|/\eps). \]
Here, the function $u_{*}(x; a_{1},..., a_{N}; d_{1}, ..., d_{N})$ is the canonical harmonic map~\cite{BBH} associated to the vortex configuration. This is a uniquely determined $\mathbb {S}^{1}$-valued map that satisfies the correct boundary condition, has vortices of degrees~$d_{k}$ at positions~$a_{k}$ and belongs to $C^{\infty}(D\setminus\{a_{1},..., a_{N}\})$.
The real-valued function $f$ is an arbitrary $C^{\infty}$ monotone non-decreasing function with $f(0)=0$, $f(r)=1$ for~$r\geqslant (1/2)\cdot \min_{j\neq k}\{|a_{j}-a_{k}|, \dist(a_{k}, \partial D)\}$. Note that we do not need such a smooth initial data for the proof of Theorem~\ref{existence} -- a~$u_{\eps}^{0}\in C^{3, \theta}(\overbar{D})$ with $\theta\in(0, \sigma)$ would be sufficient. 

We now recall the definition of a stochastically strong solution. 
\begin{mydef}\label{def:solution}
The initial-boundary value problems \eqref{SGL_Strat}-\eqref{InitialData}-\eqref{Dirichlet} and \eqref{SGL_Strat}-\eqref{InitialData}-\eqref{Neumann} have a strong solution on a time-interval $[0,T]$ if, for every filtered probability space $(\Omega, \mathfrak{F},\{\mathfrak{F}_{t}\}, \mathbb{P})$, $t\in [0, T]$ with a right-continuous complete filtration and every standard one-dimensional Brownian motion $B_{t}$ adapted to this filtration, there exists a process~$u_\eps(x,t,\omega): \overbar{D}\times[0,T]\times\Omega\rightarrow \C$ such that
\begin{itemize} 
\item for some $\beta\in(0,1)$, $u_\eps$ is a continuous $C^{3,\beta}(\overbar{D})$-valued semimartingale adapted to~$\{\mathfrak{F}_{t}\}$;  
\item $u_\eps(\cdot,t,\omega)$ satisfies the boundary conditions \eqref{Dirichlet} or \eqref{Neumann} for all $t\in[0,T]$ and almost all $\omega\in \Omega$;
\item  for all $x\in D$ and $t\in[0,T]$, the equality 
\begin{align}\label{eq:SGLIntegralForm}
u_\eps(x,t, \omega)=u^0_\eps(x)+\log(1/\eps) \int\limits_0^t \Delta u_{\eps}(x,s, \omega)&+\frac{1}{\eps^{2}}(1-|u_{\eps}|^{2})u_{\eps}(x,s,\omega)\,ds\nonumber\\
&+\int\limits_0^t(\F\cdot\nabla)u_{\eps}(x, s,\omega) \circ dB_{s}
\end{align}
holds $\pP$-almost surely.
\end{itemize}
\end{mydef}

The logarithmic coefficient at the drift in~\eqref{SGL_Strat} and \eqref{eq:SGLIntegralForm} is inherited from the deterministic parabolic Ginzburg-Landau equation. It singles out the correct time scale on which the vortices move.

We need the It\^{o} equivalent of \eqref{SGL_Strat} for the further analysis in Section~\ref{sec:energy}. The equation~\eqref{SGL_Strat} in the Stratonovich formulation corresponds to the following equation in the It\^{o} formulation (\cite{Kunita}, Chapter~6)
\begin{equation}\label{SGL_I}
 d u_{\eps}=\log(1/\eps)(\Delta u_{\eps}+(1/\eps^{2})(1-|u_{\eps}|^{2})u_{\eps})\,dt+(1/2)(\F\cdot\nabla)^{2} u_\eps\,dt+
(\F\cdot\nabla) u_{\eps}\,dB_{t}.
\end{equation}
Here, the Stratonovich-It\^{o} correction term is given by
\[(\F\cdot\nabla)^{2} u_\eps=F^{j}F^{k}\partial^{2}_{jk}u_{\eps}+F^{j}\partial_{j}F^{k}\partial_{k}u_{\eps}.\]
Hence, the transformation changes the leading-order term in the drift. 

\begin{remark}
In the It\^o equation~\eqref{SGL_I}, the drift contains an elliptic operator of the second order. The diffusion depends on a differential operator of the first order. A solution to an equation of this form exists only if the stochastic parabolicity condition~\cite{flandoli1996stochastic} is satisfied. Here, it is satisfied automatically. Indeed, \eqref{SGL_I} stems from a Stratonovich equation in which the drift and the diffusion are correctly balanced. 
\end{remark}

\subsection{Stochastic flows}\label{sec:StochasticFlows}

We recall Kunita's concept of the stochastic flows on $D$. For an extensive discussion, see the monograph of Kunita~\cite{Kunita}, Chapter~3 and Sections 2, 5, and~6 in Chapter~4.

The Stratonovich flow $\varphi_{s,t}$ corresponding to $\F$ is defined as follows. Let $T$, $(\Omega, \mathfrak{F}, \{\mathfrak F_{t}\}\mathbb{P})$, and $B_{t}$ be as in Definition~\ref{def:solution}. For a fixed $s\in [0,T)$, the process $\varphi_{s,t}(x)$ is, for every~$x\in D$ and $t\in(s,T]$, the solution of the stochastic differential equation
\[\left\{
\begin{array}{l}
d\,\varphi_{s,t}(x)=- \F(\varphi_{s,t}(x))\circ dB_t,\\
\varphi_{s,s}(x)=x.
\end{array}
\right.
\]
This means that for every $x\in D$ and $s\in[0,t)$, the process $\varphi_{s,t}(x)$ is $\mathfrak F_{t}$-adapted. 
Since $\F$ belongs to the space~$C_{c}^{3,\sigma}$ with $\sigma>0$, the family~$\varphi_{s,t}$ is, almost surely, a two-parameter family of  $C^{3,\theta}$-diffeomorphisms of $\overbar D$, for any $\theta<\sigma$. For the rest of the paper, we fix some~$\theta\in(0, \sigma)$. The transformations~$\varphi_{s,t}$ act trivially on the boundary of $D$. Furthermore, they satisfy the flow property: for any~$r\in(s,t)$, there holds
\[\varphi_{s,t}=\varphi_{r,t}\circ \varphi_{s,r}.
\]  
\begin{remark}
In our case, $\varphi_{s,t}(x)$ is explicitly given by 
$\varphi_{s,t}(x)=\Phi(x, B_t-B_s).
$
Here, $\Phi$ is the deterministic flow generated by $\F$. For every $x\in D$ and every $t\in\R$, $\Phi(x, t)$ solves
\[
\left\{
\begin{array}{l}
\partial_t\Phi(x,t)=- \F(\Phi(x,t)),\\
\Phi(x,0)=x.
\end{array}
\right.
\]
\end{remark}

\section{Existence and uniqueness of the solution}\label{sec:existence}
In this section, we prove Theorem~\ref{existence}. The arguments for Dirichlet and Neumann boundary conditions are in most aspects identical. We thus focus on the Dirichlet case.
\begin{proof}[Proof of Theorem~\ref{existence}]
We fix a filtered probability space $(\Omega, \mathfrak{F},\{\mathfrak{F}_{t}\},\mathbb{P})$ with $t\in [0,T]$ and a Brownian motion $B_{t}$ that satisfy the requirements of Definition \ref{def:solution}. We study \eqref{SGL_Strat} on this space for a fixed $\eps\in(0,1)$. 

We transform the equation into a family of random PDEs.
Let $\varphi_{s,t}(x)$  be the Stra\-to\-no\-vich flow generated by~$\F$, as described in Section \ref{sec:StochasticFlows}. The flow defines a random change of variables.  Note that the space $C^{3,\theta}(\overbar{D}; \overbar{D})$ is invariant under this transformation. We define the function 
 \begin{equation}\label{eq:changeOfVariables}
 v_{\eps}(x,t):=u_{\eps}(\varphi_{0,t}(x),t)
 \end{equation}
 in the new variables. 

An explicit computation shows that for $k\in\{1, 2\}$
\[ dv^k_{\eps}(x,t)=du^k_{\eps}(\varphi_{0,t}(x),t)-(\F\cdot\nabla) u^k_{\eps}(\varphi_{0,t}(x),t) \circ dB_t
\]
and 
\begin{align*}
f^{k}_{\eps}(v_{\eps}(x,t))&=f^{k}_\eps(u_\eps(\varphi_{0,t}(x),t))\\
&=a^{ij}(x,t)\partial^2_{ij} v_\eps^k(x,t)+b^{i}(x,t)\partial_i v_\eps^k(x,t)+
(1/\eps^2)(1-|v_\eps(x,t)|^2)v_\eps^k(x,t).
\end{align*}
The coefficients $a^{ij}$ and $b^{i}$ depend on the flow $\varphi_{s,t}$ and are thus random. They are given by
\[
a^{ij}(x,t)=\partial_l ((\varphi_{0,t})^{-1})^i(\varphi_{0,t}(x),t)\cdot\partial_l ((\varphi_{0,t})^{-1})^j(\varphi_{0,t}(x),t) 
\]
 and 
\[b^{i}(x,t)= \partial^2_{ll} ((\varphi_{0,t})^{-1})^i(\varphi_{0,t}(x),t)\]
with $i,j,l\in\{1,2\}$.

The process $u_{\eps}(x,t,\omega)$ is the unique solution of~\eqref{SGL_Strat}-\eqref{InitialData}-\eqref{Dirichlet} if and only if $v_{\eps}(x,t,\omega)$ with $v_{\eps}\in C^{1,\beta}_{t}C_{x}^{3,2\beta}$ for some $\beta>0$ is the unique solution of
\begin{equation}\label{v_eq}
\left\{
\begin{array}{l}
 k_\eps\partial_{t} v_{\eps}=a^{ij}(x,t,\omega)\partial_{ij}^2 v_\eps+b^{i}(x,t,\omega)\partial_i v_\eps+
(1/\eps^2)(1-|v_\eps|^2)v_\eps,\\
v_\eps|_{\partial D}=g,\\
v_\eps(0,x)=u_\eps^0(x).
\end{array}
\right.
\end{equation}
Indeed (cf.~\cite{Kunita}, Lemma~6.2.3), if $u_{\eps}$ solves \eqref{SGL_Strat}-\eqref{InitialData}-\eqref{Dirichlet}, then, due to definition~\eqref{eq:changeOfVariables} of~$v_{\eps}$ and the computations following it, $v_{\eps}$ solves~\eqref{v_eq}. Conversely, if $v_{\eps}\in C^{1,\beta}_{t}C_{x}^{3,2\beta}$ solves~\eqref{v_eq} then, again by~\eqref{eq:changeOfVariables} and the chain rule, $u_{\eps}$ satisfies~\eqref{SGL_Strat} and \eqref{InitialData}-\eqref{Dirichlet}. Note that $v_{\eps}$ depends on $\omega$ only because the coefficients in~\eqref{v_eq} do depend on it.

We now prove that~\eqref{v_eq} has a unique solution in the space~$C^{1,\beta}_t C^{3,2\beta}_x( \overbar{D_T})$, almost surely. Our argument relies on the properties of $a^{ij}$ and $b^{i}$ that are satisfied almost surely. Therefore, it is valid for almost every $\omega\in\Omega$. We apply it pointwise in $\omega$. The reasoning is rather standard and rather lengthy. We just sketch it below and refer the interested reader to Section~4.3 in~\cite{RWTHDiss} for a complete proof.

We interpret \eqref{v_eq} as a system of two equations on the real and the imaginary part of~$v_{\eps}$. Due to the definition of $a^{ij}$, this is a strictly parabolic system. The coefficients $a^{ij}(x,t)$ and $b^{i}(x,t)$ belong, for every $t\in[0,T]$, to the spaces~$C^{2,\theta}(\overbar{D})$ and $C^{1,\theta}(\overbar{D})$, respectively. Furthermore, they are H\"{o}lder continuous in time with any exponent 
below~$1/2$. These properties follow from the regularity of $\varphi_{s,t}$. Consequently, 
$a^{ij}$, $b^{i}\in C_t^\beta C_x^{2\beta}(\overbar{D_T})$ for any~$\beta<\theta/2$.
If $x$ belongs to $D\setminus\supp \F$, we have that $a^{ij}(x,t)=\delta^{ij}$ and  $b^{i}(x,t)=0$ for every~$t\in[0,T]$. 

First, we find a solution of~\eqref{v_eq} in the space~$C^{1,\beta}_t C^{2,2\beta}_x(\overbar{D_T})$ for some $\beta\in(0,\theta/2)$. To this end, we apply the Leray-Schauder fixed point theorem in the manner discussed in~\cite{Ladyzhenskaya_Parabolic}, Section~V.6 (see also~\cite{GilbargTrudinger}, Section~11.4 and \cite{ZeidlerI}, Section~6.8). Even though~\cite{Ladyzhenskaya_Parabolic} treats the case of a real-valued unknown function, the method is also applicable to~\eqref{v_eq}. Indeed, the two equations in~\eqref{v_eq} are coupled only through a (polynomial) nonlinearity. We consider the family of mappings~$\mathcal {F}_{\lambda}$, $\lambda\in[0,1]$ of $C^{1,\beta}_t C^{2,2\beta}_x(\overbar{D_T})$ into itself, where~$\mathcal{F}_{\lambda}(v_{\eps})=w$ if $w$ solves the linear problem corresponding to~\eqref{v_eq}
\[
 k_\eps\partial_{t} w=\lambda(a^{ij}(x,t,\omega)\partial_{ij}^2 w+b^{i}(x,t,\omega)\partial_i w)+(1-\lambda)\Delta w+N(v_{\eps}, u_{\eps}^{0})
\]
with zero initial and boundary conditions. The function $N(v_{\eps}, u_{\eps}^{0})$ is nonlinear in $v_{\eps}$, but does not involve derivatives of~$v_{\eps}$. We verify that $\mathcal {F}_{\lambda}$ satisfies the conditions of the Leray-Schauder fixed point theorem. In doing so, we use the Schauder estimates for $w$ (\cite{Ladyzhenskaya_Parabolic}, Theorem~IV.5.5) and an estimate on the $L^{\infty}$-norm of~$|v_{\eps}|^{2}$. The latter follows from the weak maximum principle applied to the equation on $(1-|v_{\eps}|^2)$. We thus obtain a solution to~\eqref{v_eq} as a fixed point of the mapping $\mathcal{F}_{1}$. Then, we directly check that the solution is unique. Finally, we show that~$v_{\eps}$ has a higher regularity. We apply Theorem~IV.5.5 of~\cite{Ladyzhenskaya_Parabolic} to the equations on the spatial derivatives of~$v_\eps$; these new equations are already linear in the unknown function. In this way, we get a solution that belongs to the space~$C^{1,\beta}_t C^{3,2\beta}_x( \overbar{D_T})$. 

Now we have that
\[u_\eps(x,t)=v_\eps((\varphi_{0,t})^{-1}(x),t),
\]
with $v_\eps\in C^{1,\beta}_t C^{3,2\beta}_x( \overbar{D_T})$ and $(\varphi_{0,t})^{-1}\in C^{\beta}_t C^{3,\theta}_x( \overbar{D_T})$. Hence, the regularity of $u_\eps$ is the minimal of the two. We conclude that $u_\eps$ belongs to the space $C^{\beta}_t C_{x}^{3,2\beta}(\overbar{D_T})$, as claimed.
\qed
\end{proof}
\begin{remark}
The difference between the Dirichlet and the Neumann cases consists only in the method of obtaining the $L^{\infty}$-estimate on $|v_{\eps}|^{2}$.
In the Neumann case, the maximum principle is not at hand. Instead, we use the trick from Section 3 of~\cite{Alouges_Soyeur}. In both cases, we are able to conclude that $|v_{\eps}|^2\leqslant 1$ in $\overbar {D_{T}}$. 
\end{remark}

\begin{remark}
The assumption that the field $\F$ is compactly supported in $D$ is essential for the proof. Otherwise the flow~$\varphi_{s,t}$ would act non-trivially on the boundary $\partial D$. Then, the equation~\eqref{v_eq} would be posed in a domain that itself evolves with time.    
\end{remark}

\section{Ginzburg-Landau energy in the stochastic case}\label{sec:energy}
In this section, we derive the equation on the Ginzburg-Landau energy of the solution to \eqref{SGL_Strat}. With its help, we establish the control on the energy growth in Proposition~\ref{EnergyEstimate}. Furthermore, we discuss some consequences of the obtained estimates.
\subsection{The equation for $E_\eps(u_\eps(t))$}
The calculations and notation in this subsection closely follow those of Section 6 in~\cite{RW}. 

\begin{proposition}\label{Energy_dynamics}
If $u_\eps$ is the solution of (\ref{SGL_Strat}), its Ginzburg-Landau energy $E_{\eps}(u_\eps)$ satisfies the equation
 
\begin{align}\label{Energy_Evolution}
E_{\eps}(u_\eps(t_{2}))-&E_{\eps}(u_\eps(t_{1}))= - \log(1/\eps)\int\limits_{t_1}^{t_2}\int\limits_D |f_\eps(u_\eps)|^2\,dxd\tau
-\int\limits_{t_1}^{t_2}\int\limits_D (f_\eps(u_\eps), (\F\cdot\nabla) u_\eps)\,dx dB_\tau\nonumber\\
&+\frac{1}{2}\int\limits_{t_1}^{t_2}\int\limits_D e_\eps(u_\eps)\diverg(\F\cdot \diverg \F)\, dxd\tau+\frac{1}{2}\int\limits_{t_1}^{t_2}\int\limits_D|\nabla u_\eps\cdot\nabla \F|^2\, dxd\tau\nonumber\\
&+\int\limits_{t_1}^{t_2}\int\limits_D \nabla u_\eps:(\nabla u_\eps\cdot(\nabla \F\cdot \nabla \F))\,dxd\tau
-\int\limits_{t_1}^{t_2}\int\limits_D \nabla u_\eps:(\nabla u_\eps\cdot\nabla \F )\cdot\diverg \F\,dxd\tau\nonumber\\
&-\frac{1}{2}\int\limits_{t_1}^{t_2}\int\limits_D\nabla u_\eps:(\nabla u_\eps\cdot\nabla(\nabla \F\cdot \F))\,dxd\tau
\end{align}
  
for all $0\leqslant t_{1}<t_{2}\leqslant T$, $\pP$-almost surely. 
\end{proposition}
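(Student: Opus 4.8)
The plan is to apply It\^o's formula to the smooth functional $u\mapsto E_\eps(u)$ along the semimartingale $t\mapsto u_\eps(t)$, starting from the It\^o form \eqref{SGL_I} written as $du_\eps=a_\eps\,dt+b_\eps\,dB_t$ with $a_\eps:=\log(1/\eps)f_\eps(u_\eps)+\tfrac12(\F\cdot\nabla)^2u_\eps$ and $b_\eps:=(\F\cdot\nabla)u_\eps$. By Theorem~\ref{existence}, $u_\eps(t)\in C^{3,\beta}(\overbar{D})$ and $\F\in C^{3,\sigma}_c$, so $a_\eps$ and $b_\eps$ are continuous, adapted, $C^{1,\beta}(\overbar{D})\hookrightarrow H^1(D)$-valued processes; hence $u_\eps$ is a continuous $H^1(D)$-valued semimartingale with this decomposition, and since $D\subset\plane$ (so $H^1\hookrightarrow L^4$) the functional $E_\eps$ is a smooth polynomial on $H^1(D)$. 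The It\^o formula for $C^2$ functionals on a Hilbert space — or, equivalently and more elementarily, the finite-dimensional It\^o formula applied to $e_\eps(u_\eps(x,\cdot))$ pointwise in $x$, treating $u_\eps(x,\cdot)$ and $\nabla u_\eps(x,\cdot)$ as real semimartingales whose SDEs come from differentiating \eqref{SGL_I} in $x$, followed by a stochastic Fubini theorem as in Section~6 of \cite{RW} — then gives, $\pP$-a.s.,
\begin{align*}
E_\eps(u_\eps(t_2))-E_\eps(u_\eps(t_1))=\int_{t_1}^{t_2}\langle E_\eps'(u_\eps),a_\eps\rangle\,d\tau &+\int_{t_1}^{t_2}\langle E_\eps'(u_\eps),b_\eps\rangle\,dB_\tau\\ &+\tfrac12\int_{t_1}^{t_2}\langle E_\eps''(u_\eps)b_\eps,b_\eps\rangle\,d\tau,
\end{align*}
where $\langle\cdot,\cdot\rangle$ is the $L^2(D)$ scalar product, $E_\eps'(u)=-f_\eps(u)$, and $E_\eps''(u)[h,h]=\int_D|\nabla h|^2-\tfrac1{\eps^2}(1-|u|^2)|h|^2+\tfrac2{\eps^2}(u,h)^2\,dx$. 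The boundary term in the integration by parts defining $E_\eps'$ vanishes: in the Neumann case $\partial_\nnu u_\eps=0$ on $\partial D$, and in the Dirichlet case $a_\eps$ and $b_\eps$ vanish on $\partial D$ because $\F$ is compactly supported and, $u_\eps|_{\partial D}=g$ being constant in $t$, \eqref{SGL_I} forces $\Delta u_\eps=0$, hence $f_\eps(u_\eps)=0$, on $\partial D$. Two of the three terms are already as required: $\langle E_\eps'(u_\eps),\log(1/\eps)f_\eps(u_\eps)\rangle=-\log(1/\eps)\int_D|f_\eps(u_\eps)|^2\,dx$ is the dissipation term of \eqref{Energy_Evolution}, and $\langle E_\eps'(u_\eps),b_\eps\rangle=-\int_D(f_\eps(u_\eps),(\F\cdot\nabla)u_\eps)\,dx$ is the integrand of its stochastic integral. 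Since all processes involved are continuous in time, the resulting identity then holds for all $0\leqslant t_1<t_2\leqslant T$ on a single event of full probability.

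It remains to identify the ``Stratonovich correction''
\[
R_\eps:=\tfrac12\langle E_\eps'(u_\eps),(\F\cdot\nabla)^2u_\eps\rangle+\tfrac12\langle E_\eps''(u_\eps)(\F\cdot\nabla)u_\eps,(\F\cdot\nabla)u_\eps\rangle
\]
with the sum of the remaining five integrands of \eqref{Energy_Evolution}. The key point is that
\[
R_\eps=\tfrac12\,\frac{d^2}{d\tau^2}\Big|_{\tau=0}E_\eps\big(u_\eps(t)\circ\Phi_\tau\big),
\]
where $\Phi_\tau$ is the deterministic autonomous flow of $\F$ (essentially the map $\Phi$ of the Remark in Section~\ref{sec:StochasticFlows}; the sign of $\F$ is irrelevant since only the second $\tau$-derivative enters). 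Indeed, for an autonomous flow one has $\frac{d}{d\tau}(w\circ\Phi_\tau)=((\F\cdot\nabla)w)\circ\Phi_\tau$, whence $\frac{d}{d\tau}|_0(w\circ\Phi_\tau)=(\F\cdot\nabla)w$ and $\frac{d^2}{d\tau^2}|_0(w\circ\Phi_\tau)=(\F\cdot\nabla)^2w$; the chain rule for the $C^2$ functional $E_\eps$ — with boundary terms vanishing again, since $\Phi_\tau=\mathrm{id}$ near $\partial D$ — then gives $\frac{d^2}{d\tau^2}|_0E_\eps(w\circ\Phi_\tau)=\langle E_\eps'(w),(\F\cdot\nabla)^2w\rangle+\langle E_\eps''(w)(\F\cdot\nabla)w,(\F\cdot\nabla)w\rangle=2R_\eps$ with $w=u_\eps(t)$. (Equivalently, one may start from the Stratonovich equation \eqref{SGL_Strat}, apply the Stratonovich chain rule to get $dE_\eps(u_\eps)=-\log(1/\eps)\int_D|f_\eps(u_\eps)|^2\,dx\,dt+\langle E_\eps'(u_\eps),(\F\cdot\nabla)u_\eps\rangle\circ dB_\tau$, and convert the stochastic integral to It\^o form; by the flow property $\Phi_\sigma\circ\Phi_\tau=\Phi_{\sigma+\tau}$ the correction $\tfrac12\,d\langle\langle E_\eps'(u_\eps),(\F\cdot\nabla)u_\eps\rangle,B\rangle_\tau$ equals $\tfrac12\,\frac{d^2}{ds^2}|_0E_\eps(u_\eps(\tau)\circ\Phi_s)\,d\tau$.)

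Finally I would compute $\frac{d^2}{d\tau^2}|_0E_\eps(w\circ\Phi_\tau)$ for fixed $w:=u_\eps(t)$ by the change of variables $y=\Phi_\tau(x)$: with $\nabla_x(w\circ\Phi_\tau)(x)=\nabla w(y)\cdot M_\tau(y)$, where $M_\tau$ is the inverse of the Jacobian matrix of $\Phi_{-\tau}$, and $dx=j_\tau(y)\,dy$ with $j_\tau:=\det(\text{Jacobian of }\Phi_{-\tau})$, one gets
\[
E_\eps(w\circ\Phi_\tau)=\int_D\Big[\tfrac12\,\nabla w:\big(\nabla w\cdot M_\tau M_\tau^{T}\big)+\tfrac1{4\eps^2}(1-|w|^2)^2\Big]\,j_\tau\,dy,
\]
and the five integrands of \eqref{Energy_Evolution} emerge by Taylor-expanding to second order in $\tau$ from the facts (obtained from $\partial_\tau|_0(\text{Jacobian of }\Phi_{-\tau})=-\nabla\F$ by elementary matrix calculus, with the index conventions of \eqref{Energy_Evolution}) $M_0=I$, $\partial_\tau M_\tau|_0=\nabla\F$, $\partial_\tau^2M_\tau|_0=2\,\nabla\F\cdot\nabla\F-\nabla(\nabla\F\cdot\F)$, $j_0=1$, $\partial_\tau j_\tau|_0=-\diverg\F$, $\partial_\tau^2j_\tau|_0=\diverg(\F\cdot\diverg\F)$, together with the symmetrization identity $\nabla w:(\nabla w\cdot A)=\nabla w:(\nabla w\cdot A^{T})$ and the observation $\nabla w:(\nabla w\cdot\nabla\F\,\nabla\F^{T})=|\nabla w\cdot\nabla\F|^2$; this produces exactly the last five integrands of \eqref{Energy_Evolution}, with coefficients $\tfrac12,\tfrac12,1,-1,-\tfrac12$. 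Integrating the differential identity over $[t_1,t_2]$ yields \eqref{Energy_Evolution}. I expect the main obstacle to lie not in this lengthy-but-routine tensor bookkeeping but in making the It\^o-calculus step rigorous — the It\^o formula for the $C^{3,\beta}$-valued semimartingale $u_\eps$, the integrations by parts, and the boundary-term cancellations — which is most safely handled by the pointwise-in-$x$ plus stochastic-Fubini argument of \cite{RW}, after checking that $u_\eps(x,\cdot)$ and its spatial derivatives are genuine semimartingales (by spatially differentiating \eqref{SGL_I}, using $u_\eps\in C^{3,\beta}$).
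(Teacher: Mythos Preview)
Your proposal is correct, and the route you take to identify the correction term is genuinely different from the paper's.

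Both arguments start identically: apply the It\^o formula on $H^1$ to $E_\eps$ along the It\^o form \eqref{SGL_I}, which immediately produces the dissipation term and the stochastic integral in \eqref{Energy_Evolution}. The divergence is in how the remaining quantity
\[
R_\eps=\tfrac12 DE_\eps(u_\eps)\langle(\F\cdot\nabla)^2u_\eps\rangle+\tfrac12 D^2E_\eps(u_\eps)\langle(\F\cdot\nabla)u_\eps,(\F\cdot\nabla)u_\eps\rangle
\]
is reduced to the five deterministic integrands. The paper does this by brute force: it expands $R_\eps$ into terms $T_1,\dots,T_5$, integrates by parts repeatedly in the spatial variables (generating intermediate terms $T_6,\dots,T_{15}$, some containing third spatial derivatives of $u_\eps$), and tracks cancellations until only $S_3,\dots,S_7$ survive. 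Your approach is more structural: you recognise $2R_\eps$ as the second variation $\frac{d^2}{d\tau^2}\big|_{0}E_\eps(u_\eps\circ\Phi_\tau)$ along the deterministic flow of $\F$, pull back by $y=\Phi_\tau(x)$ so that only the metric factor $M_\tau M_\tau^{T}$ and the volume factor $j_\tau$ depend on $\tau$, and read off the five terms from the second-order Taylor coefficients $M_\tau'(0)=\nabla\F$, $M_\tau''(0)=2\,\nabla\F\cdot\nabla\F-\nabla(\nabla\F\cdot\F)$, $j_\tau'(0)=-\diverg\F$, $j_\tau''(0)=\diverg(\F\,\diverg\F)$. This is cleaner and explains \emph{why} the answer depends only on $\nabla\F$ and $\nabla^2\F$; it also avoids the explicit appearance of $\partial^3 u_\eps$ that the paper's Remark flags. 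Conversely, the paper's calculation is entirely self-contained (no flow machinery, no matrix calculus for $M_\tau''$) and makes every integration by parts and boundary cancellation visible, which is reassuring given that the It\^o step is the delicate part you yourself single out. Either way, the rigorous justification of the It\^o formula and of the vanishing boundary terms is the same, and your treatment of it (Neumann: $\partial_\nnu u_\eps=0$; Dirichlet: $\F$ compactly supported so the variations vanish near $\partial D$) is adequate.
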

We recall that the quantity $f_\eps(u_\eps)$ has been defined in~\eqref{rhs}, and $e_{\eps}(u_{\eps})$ in~\eqref{eq:EnergyDensity}.

We introduce a scalar function 
\[\psi(x):=\frac{1}{2}\diverg(\F\cdot \diverg \F)
\]
and a matrix-valued function
\[\Psi(x):=\frac{1}{2}\nabla \F\cdot\nabla \F^\top+(\nabla \F\cdot\nabla \F)^\top-\, \diverg \F\cdot\nabla \F^\top-\frac{1}{2}(\nabla(\nabla \F\cdot \F))^\top.
\]
For a matrix $A$, $A^{\top}$ is the transpose matrix with the entries $(A^{\top})_{ij}=A_{ji}$. For two matrices $A$ and $B$, $A\cdot B$ is the usual matrix product, which is again a matrix. Their Frobenius product $A:B=\trace(A\cdot B^\top)$ is a scalar. 

Note that $\psi(x)$ and $\Psi(x)$ depend only on the field $\F$ and its derivatives up to the second order.
We write the quantity from the last three lines of \eqref{Energy_Evolution} in a more compact form as
\begin{equation}\label{Extra}
\int\limits_{t_1}^{t_2}\int\limits_D e_\eps(u_\eps)\psi(x)+\trace(\nabla u_\eps\cdot\Psi(x)\cdot\nabla u_\eps^\top)\,dxd\tau.
\end{equation}
It appears in~\eqref{Energy_Evolution} due to the Stratonovich-It\^{o} correction in~\eqref{SGL_I} and the application of the It\^{o} lemma. It thus accounts for the stochastic effects.

The proof is a direct application of the It\^{o} lemma to $E_\eps(u_\eps(t))$. At first glance, the equation on $E_\eps(u_\eps(t))$ should contain terms with the mixed second-order partial derivatives of~$u_\eps$. These quantities are beyond our control. According to Proposition~\ref{Energy_dynamics}, they are eventually eliminated from the equation. This is possible because the diffusion term is a linear function of $\nabla u_{\eps}$ and the energy contains the quantity $|\nabla u_{\eps}|^{2}$. The troublesome quantities appear in the Stratonovich-It\^{o} correction term, which depends on the expression $(\F\cdot\nabla)u_\eps$, and in the second-order derivative of $E_\eps(u_{\eps})$. In a sum, they can be exactly balanced.

\begin{proof}[Proof of Proposition~\ref{Energy_dynamics}.]
We are going to apply the It\^{o} lemma to $E_{\eps}(u_\eps(t))$. Therefore, we need the explicit expression for the derivatives of $E_\eps(u)$. The first and the second Fr\'e\-chet derivatives of $E_\eps(u)$ with respect to the Sobolev space $H^{1}$ are given by
\begin{equation}\label{eq:firstDerOfEn}
DE_\eps(u)\langle v \rangle=\int\limits_D (\nabla u:\nabla v)-(1/\eps^{2})(1-|u|^{2})(u,v)\,dx
\end{equation}
and 
\begin{equation}\label{eq:secondDerOfEn}
D^2E_\eps(u)\langle v,w\rangle =
\int\limits_D(\nabla w:\nabla v)-(1/\eps^{2})(1-|u|^{2})(w,v)+(2/\eps^{2})(u,w)(u,v)\,dx.
\end{equation}
Here, the functions $v$ and $w$ are arbitrary elements of $H^{1}$\!. Due to the Sobolev embedding~$H^{1}\hookrightarrow L^{p}$ that holds for every $p\in[1,\infty)$ and to the form of the nonlinearity in~$E_{\eps}(u)$, the mappings $E_{\eps}$, $DE_{\eps}$, and 
$D^{2}E_{\eps}$ are uniformly continuous on bounded subsets of~$H^{1}$. Therefore, we obtain with the It\^{o} formula (\cite{daPrato_Zabczyk}, Theorem 4.17) that $E_{\eps}(u_\eps(t))$ satisfies the equation
\begin{align}\label{Energy_first}
E_{\eps}(u_\eps(t_{2}))-& E_{\eps}(u_\eps(t_{1})) = \log(1/\eps)\int\limits_{t_1}^{t_2}DE_\eps(u_\eps)\langle\Delta u_{\eps}+(1/\eps^{2})(1-|u_{\eps}|^{2})u_{\eps}\rangle \,d\tau\nonumber\\
&+\int\limits_{t_1}^{t_2}DE_\eps(u_\eps)\langle (\F\cdot\nabla)  u_{\eps}\,dB_\tau\rangle 
+\frac{1}{2}\int\limits_{t_1}^{t_2}DE_\eps(u_\eps)\langle(\F\cdot\nabla)^{2} u_\eps \rangle \,d\tau\nonumber\\
&+\frac{1}{2}\int\limits_{t_1}^{t_2}D^2 E_\eps(u_\eps)\langle(\F\cdot\nabla)u_{\eps}\,dB_\tau,(\F\cdot\nabla)  u_{\eps} \,dB_\tau\rangle,
\end{align}
$\pP$-almost surely for any $0\leqslant t_{1}<t_{2}\leqslant T$.

We denote the seven terms on the right-hand side of~\eqref{Energy_Evolution} by $S_{1}, ... , S_{7}$. Our aim is to transform the right-hand side of~\eqref{Energy_first} into the sum $S_{1}+...+S_{7}$. With~\eqref{eq:firstDerOfEn} and~\eqref{eq:secondDerOfEn}, we see immediately that the the first and the second term on the right-hand side of \eqref{Energy_first} are equal to $S_{1}$ and $S_{2}$, respectively.

We transform the remaining part of~\eqref{Energy_first} into
\begin{multline*}\frac{1}{2}\int\limits_{t_1}^{t_2}DE_\eps(u_\eps)\langle(\F\cdot\nabla)^{2} u_\eps \rangle \,d\tau
=-\frac{1}{2}\int\limits_{t_1}^{t_2}\int\limits_D (\Delta u_{\eps}+(1/\eps^{2})(1-|u_{\eps}|^{2})u_{\eps},(\F\cdot\nabla)^{2} u_\eps )\,dxd\tau\\
=-\frac{1}{2}\int\limits_{t_1}^{t_2}\int\limits_D \left(\Delta u_{\eps},(\F\cdot\nabla)^{2}u_\eps \right)\,dxd\tau-\frac{1}{2}\int\limits_{t_1}^{t_2}\int\limits_D((1/\eps^{2})(1-|u_{\eps}|^{2})u_{\eps},(\F\cdot\nabla)^{2} u_\eps)\,dxd\tau\\
=: T_1+T_2
\end{multline*}
 and
\begin{multline*}
\frac{1}{2}\int\limits_{t_1}^{t_2}D^2 E_\eps(u_\eps)\langle(\F\cdot\nabla) u_{\eps}\, dB_\tau, (\F\cdot\nabla) u_{\eps}\, dB_\tau\rangle\\
=\,\frac{1}{2}\int\limits_{t_1}^{t_2}\int\limits_D \nabla((\F\cdot\nabla) u_\eps):\nabla((\F\cdot\nabla) u_\eps) \,dxd\tau
+\frac{1}{2} \int\limits_{t_1}^{t_2}\int\limits_D (2/\eps^2)((u_\eps,(\F\cdot\nabla)u_\eps))^2\,dxd\tau\\
-\frac{1}{2} \int\limits_{t_1}^{t_2}\int\limits_D (1/\eps^2)(1-|u_\eps|^{2})|(\F\cdot\nabla) u_\eps|^2 \,dxd\tau
=: T_3+T_4+T_5.
\end{multline*}
It remains to bring the terms $T_1 - T_5$ to the desired form. Below, all our manipulations are made only in spatial variables. 

We consider first the terms $T_2, T_4, T_5$, which are related to the nonlinear part of the energy functional. We recall that the Stratonovich-It\^{o} correction term is explicitly given by
\[((\F\cdot\nabla)^{2} u_\eps)^j=\partial^2_{sl} u_\eps^j F^sF^l+ 
\partial_s u_\eps^j \partial_l F^sF^l.
\]
Therefore
\begin{multline*}
T_2=-\frac{1}{2}\int\limits_{t_1}^{t_2}\int\limits_D (1/\eps^{2})(1-|u_{\eps}|^{2})u_{\eps}^j\partial^2_{sl} u_\eps^j F^sF^l\,dxd\tau
\\-\frac{1}{2}\int\limits_{t_1}^{t_2}\int\limits_D (1/\eps^{2})(1-|u_{\eps}|^{2})u_{\eps}^j\partial_s u_\eps^j \partial_l F^sF^l\, \,dxd\tau
=T_{6}+T_{7}.
\end{multline*}
Note that
 $\partial_{s} (1-|u_\eps|^{2})^2= -4 (1-|u_\eps|^{2})u_\eps^{j}\partial_{s} u_\eps^{j}$ for $s=1,2$.

We work with the term $T_6$ and integrate by parts with respect to $x_l$. This gives
\begin{align*}
T_{6}=&-\frac{1}{2\eps^{2}}\int\limits_{t_1}^{t_2}\int\limits_D (1-|u_{\eps}|^{2})u_{\eps}^jF^sF^l\partial^2_{sl} u_{\eps}^j \,dxd\tau
=\frac{1}{2\eps^{2}}\int\limits_{t_1}^{t_2}\int\limits_D \partial_l((1-|u_{\eps}|^{2})u_{\eps}^jF^sF^l)\partial_s u_{\eps}^j \,dxd\tau\\
&=\frac{1}{2\eps^{2}}\int\limits_{t_1}^{t_2}\int\limits_D \left(-2 u_{\eps}\cdot\partial_l u_{\eps}\right)u_{\eps}^jF^sF^l \partial_s u_{\eps}^j \,dxd\tau
+\frac{1}{2\eps^{2}}\int\limits_{t_1}^{t_2}\int\limits_D (1-|u_{\eps}|^{2}) \partial_l u_{\eps}^j F^sF^l \partial_s u_{\eps}^j \,dxd\tau\\
& \ +\frac{1}{2\eps^{2}}\int\limits_{t_1}^{t_2}\int\limits_D (1-|u_{\eps}|^{2})u_{\eps}^j \partial_s u_{\eps}^j\partial_l(F^sF^l)\,dxd\tau\\
&=-T_5-T_4+\frac{1}{2\eps^{2}}\int\limits_{t_1}^{t_2}\int\limits_D (1-|u_{\eps}|^{2})u_{\eps}^j \partial_s u_{\eps}^j\partial_l(F^sF^l)\,dxd\tau.
\end{align*}

Thus, we have that
\begin{multline*}
T_2 + T_4 + T_5=-\frac{1}{8\eps^{2}}\int\limits_{t_1}^{t_2}\int\limits_D \partial_{s} (1-|u_\eps|^{2})^2 \cdot (\partial_l(F^sF^l)-\partial_l F^s F^l) \,dxd\tau\\
= \frac{1}{2}\int\limits_{t_1}^{t_2}\frac{1}{4\eps^{2}}\int\limits_D (1-|u_\eps|^{2})^2 \diverg\left(\F\cdot\diverg \F\right) \,dxd\tau.
\end{multline*}

We turn our attention to the terms $T_1$ and $T_3$, which contain the highest-order derivatives of $u_\eps$.

We write $T_1$ explicitly:
\[T_1=-\frac{1}{2}\int\limits_{t_1}^{t_2}\int\limits_D \partial^2_{kk} u_\eps^j\cdot\big( \partial^2_{sl} u_\eps^j F^sF^l+ 
\partial_l u_\eps^j \partial_s F^lF^s\big)\,dxd\tau=T_8+T_9.
\]

We integrate by parts with respect to $x_k$ in the term $T_9$ and see that
\begin{align*}
T_9&=-\frac{1}{2}\int\limits_{t_1}^{t_2}\int\limits_D\partial^2_{kk} u_\eps^j\partial_l u_\eps^j\partial_s F^l F^s \,dxd\tau\\
=&
\frac{1}{2}\int\limits_{t_1}^{t_2}\int\limits_D\partial_k u_\eps^j\partial^2_{kl} u_\eps^j\partial_s F^l F^s\, \,dxd\tau+
\frac{1}{2}\int\limits_{t_1}^{t_2}\int\limits_D\partial_k u_\eps^j\partial_l u_\eps^j\partial_k(\partial_s F^l F^s)\,dxd\tau\\
=&\frac{1}{2}\int\limits_{t_1}^{t_2}\int\limits_D\partial_l\big(\tfrac{1}{2}|\nabla u_\eps|^2\big)\partial_s F^l F^s \,dxd\tau+
\frac{1}{2}\int\limits_{t_1}^{t_2}\int\limits_D\partial_k u_\eps^j\partial_l u_\eps^j\partial_s F^l\partial_k F^s \,dxd\tau\\
& \ \ +\frac{1}{2}\int\limits_{t_1}^{t_2}\int\limits_D\partial_k u_\eps^j\partial_l u_\eps^j \partial^2_{s k} F^l F^s \,dxd\tau\\
=&-\frac{1}{2}\int\limits_{t_1}^{t_2}\int\limits_D\tfrac{1}{2}|\nabla u_\eps|^2(\diverg (\nabla \F\cdot \F))\,dxd\tau+\frac{1}{2}\int\limits_{t_1}^{t_2}\int\limits_D\nabla u_\eps:(\nabla u_\eps\cdot(\nabla \F\cdot \nabla \F))\,dxd\tau+T_{10}\\
=&\ R_{1}+\frac{1}{2}S_{5}+T_{10}.
\end{align*}
  
In the next step, we integrate by parts with respect to $x_k$ in the term $T_{10}$ and get
\begin{multline*}
T_{10}=-\frac{1}{2}\int\limits_{t_1}^{t_2}\int\limits_D\Delta u^j\partial_l u_\eps^j\partial_s F^l F^s \,dxd\tau-
\frac{1}{2}\int\limits_{t_1}^{t_2}\int\limits_D\partial_k u_\eps^j\partial^2_{l k} u_\eps^j  \partial_s F^l F^s \,dxd\tau\\
 -\frac{1}{2}\int\limits_{t_1}^{t_2}\int\limits_D\partial_k u_\eps^j\partial_l u_\eps^j\partial_s F^l\partial_k F^s \,dxd\tau\\
=-\frac{1}{2}\int\limits_{t_1}^{t_2}\int\limits_D f_\eps(u_\eps)^j\partial_l u_\eps^j\partial_s F^l F^s \,dxd\tau
+\frac{1}{2}\int\limits_{t_1}^{t_2}\int\limits_D(1/\eps^2)(1-|u_\eps|^2)u_\eps^j\partial_l u_\eps^j\partial_s F^l F^s \,dxd\tau\\
-\frac{1}{2}\int\limits_{t_1}^{t_2}\int\limits_D\partial_l\left(e_\eps(u_\eps)\right)\partial_s F^l F^s \,dxd\tau
+\frac{1}{2}\int\limits_{t_1}^{t_2}\int\limits_D\partial_l(\tfrac{1}{4\eps^2}(1-|u_\eps|^2)^2)\partial_s F^l F^s\,dxd\tau \\
-\frac{1}{2}\int\limits_{t_1}^{t_2}\int\limits_D\nabla u_\eps:(\nabla u_\eps\cdot(\nabla \F\cdot \nabla \F))\,dxd\tau.
\end{multline*}
The second and the fourth term above, both corresponding to the quantity $(1-|u_\eps|^2)^{2}$, cancel each other. We conclude that
\begin{multline*}
T_{10}=-\frac{1}{2}\int\limits_{t_1}^{t_2}\int\limits_D f_\eps(u_\eps)^j\partial_l u_\eps^j\partial_s F^l F^s \,dxd\tau 
-\frac{1}{2}\int\limits_{t_1}^{t_2}\int\limits_D\partial_l\left(e_\eps(u_\eps)\right)\partial_s F^l F^s \,dxd\tau \\
-\frac{1}{2}\int\limits_{t_1}^{t_2}\int\limits_D\nabla u_\eps:(\nabla u_\eps\cdot(\nabla \F\cdot \nabla \F))\,dxd\tau.
\end{multline*}
We note that 
$ f_\eps(u_\eps)^j\partial_l u_\eps^j=-\partial_l e_\eps(u_\eps)+\partial_k(\partial_l u_\eps^j\partial_k u_\eps^j).
$
Hence, 
\begin{multline*}
T_{10}=\frac{1}{2}\int\limits_{t_1}^{t_2}\int\limits_D \nabla u_\eps:(\nabla u_\eps\cdot\nabla (\nabla \F\cdot \F))\,dxd\tau 
-\frac{1}{2}\int\limits_{t_1}^{t_2}\int\limits_D\nabla u_\eps:(\nabla u_\eps\cdot(\nabla \F\cdot \nabla \F))\,dxd\tau \\
=-S_{7}-\frac{1}{2}S_{5}.
\end{multline*}

We write $T_3$ in more detail:
\begin{align*}
T_3 = & \ \frac{1}{2}\int\limits_{t_1}^{t_2}\int\limits_D\sum\limits_{k,j=1}^2\big(\partial_k((\F\cdot\nabla) u_\eps^j)\big)^2 \,dxd\tau \\
= & \ \frac{1}{2}\int\limits_{t_1}^{t_2}\int\limits_D \partial^2_{ks} u_\eps^j\partial^2_{kl} u_\eps^j F^sF^l\,dxd\tau 
+\int\limits_{t_1}^{t_2}\int\limits_D \partial^2_{ks} u_\eps^j\partial_l u_\eps^j F^s\partial_k F^l \,dxd\tau \\
&\ +\frac{1}{2}\int\limits_{t_1}^{t_2}\int\limits_D\partial_s u_\eps^j\partial_l u_\eps^j\partial_k F^s\partial_k F^l \,dxd\tau \\
=&\ T_{11}+T_{12}+\frac{1}{2}\int\limits_{t_1}^{t_2}\int\limits_D|\nabla u_\eps\cdot\nabla \F|^2\,dxd\tau = T_{11}+T_{12}+S_{4}.
\end{align*}
  
We work with the term $T_{11}$. In the first step, we integrate by parts with respect to $x_s$. In the second step, we integrate by parts with respect to $x_k$ only in the first of the new integrals.
\begin{align*}
T_{11}& =-\frac{1}{2}\int\limits_{t_1}^{t_2}\int\limits_D \partial^3_{kls} u_\eps^j\partial_k u_\eps^j F^l F^s \,dxd\tau-
\frac{1}{2}\int\limits_{t_1}^{t_2}\int\limits_D\partial^2_{kl} u_\eps^j\partial_k u_\eps^j\partial_s(F^l F^s)\,dxd\tau\\
& =\frac{1}{2}\int\limits_{t_1}^{t_2}\int\limits_D\partial^2_{ls} u_\eps^j\partial^2_{kk} u_\eps^j F^l F^s \,dxd\tau+
\frac{1}{2}\int\limits_{t_1}^{t_2}\int\limits_D\partial^2_{l s} u_\eps^j\partial_k u_\eps^j\partial_k(F^l F^s) \,dxd\tau\\
\ \  & -\frac{1}{2}\int\limits_{t_1}^{t_2}\int\limits_D\partial^2_{kl} u_\eps^j\partial_k u_\eps^j\partial_s(F^lF^s)\,dxd\tau
=T_{13}+T_{14}+T_{15}.
\end{align*}
We see immediately that $T_{13}= -T_{8}$. 

Since $T_{14}$ is symmetric in $s$ and $l$, we write it as
\[T_{14}=\int\limits_{t_1}^{t_2}\int\limits_D\partial^2_{l s} u_\eps^j\partial_k u_\eps^j\partial_k F^l F^s \,dxd\tau.
\]
Next, we integrate by parts with respect to $x_s$ and obtain
\begin{align*}
T_{14}& =-\int\limits_{t_1}^{t_2}\int\limits_D\partial^2_{ks} u_\eps^j\partial_l u_\eps^j\partial_k F^l F^s \,dxd\tau-
\int\limits_{t_1}^{t_2}\int\limits_D\partial_l u_\eps^j\partial_k u_\eps^j\partial_s\big(\partial_k F^l F^s\big)\,dxd\tau\\
&=-T_{12}-\int\limits_{t_1}^{t_2}\int\limits_D \partial_l u_\eps^j\partial_k u_\eps^j\partial_k F^l\partial_s F^s \,dxd\tau
-\int\limits_{t_1}^{t_2}\int\limits_D \partial_l u_\eps^j\partial_k u_\eps^j\partial^2_{ks} F^l F^s \,dxd\tau\\
&=-T_{12}-\int\limits_{t_1}^{t_2}\int\limits_D \nabla u_\eps:(\nabla u_\eps\cdot\nabla \F )\cdot\diverg \F \,dxd\tau-2\,T_{10}=-T_{12}+S_{6}+2S_{7}+S_{5}.
\end{align*}
 
Observing that $\partial^2_{kl} u_\eps^j\partial_k u_\eps^j= (1/2)\partial_l(\partial_k u_\eps^j)^2$,
we conclude that
\begin{align*}
T_{15}&=-\frac{1}{4}\int\limits_{t_1}^{t_2}\int\limits_D\partial_l(|\nabla u_\eps|^2)\partial_s(F^l F^s)\,dxd\tau\\
&=
\frac{1}{2}\int\limits_{t_1}^{t_2}\int\limits_D\frac{1}{2} |\nabla u_\eps|^2 \left(\diverg (\F\cdot\diverg \F)+\diverg(\nabla \F\cdot \F)\right)\,dxd\tau\\
&=\frac{1}{2}\int\limits_{t_1}^{t_2}\int\limits_D\frac{1}{2} |\nabla u_\eps|^2 \diverg (\F\cdot\diverg \F)\,dxd\tau-R_{1}.
\end{align*}
  We obtain the following chain of equalities
\begin{align*}
T_{1}+T_{3}&=T_{8}+T_{9}+T_{11}+T_{12}+S_{4}\\
&=T_{8}+R_{1}+\frac{1}{2}S_{5}+T_{10}+T_{12}+T_{13}+T_{14}+T_{15}+S_{4}\\
&=T_{8}+R_{1}+\frac{1}{2}S_{5}+T_{10}+T_{12}-T_{8}-T_{12}+S_{6}-2T_{10}-R_{1}\\
&\qquad+\frac{1}{2}\int\limits_{t_1}^{t_2}\int\limits_D\frac{1}{2} |\nabla u_\eps|^2 \diverg (\F\cdot\diverg \F)\,dxd\tau+S_{4}\\
&=S_{4}+S_{5}+S_{6}+S_{7}+\frac{1}{2} \int\limits_{t_1}^{t_2}\int\limits_D|\nabla u_\eps|^2 \diverg (\F\cdot\diverg \F)\,dxd\tau.
\end{align*}
Therefore, we obtain that $T_{1}+...+T_{5}=S_{3}+...+S_{7}$,
as desired.
\qed
\end{proof}

\begin{remark}
In the proof, we explicitly use the regularity of $u_\eps$. In the term $T_{11}$, there appear the third-order partial derivatives of $u_{\eps}$. Since they exist in the classical sense, they do not require any particular treatment.
\end{remark}
\begin{remark}
For the noise of the form 
$\sum_{k=1}^N \F_k(x)\circ dB_t^k,
$\!
the computations are essentially the same. The function~$\psi$ and the matrix $\Psi$ in~\eqref{Extra} are equal to the sums of the $\psi_{k}$'s and $\Psi_{k}$'s corresponding to each $\F_k$. This is due to the independence of the driving Brownian motions. If the noise is an infinite sum, the expressions are more complicated, cf. Proposition~6.1 in~\cite{RW}.
\end{remark}
 
\begin{remark} Consider the special case when the external field is given by $\F=(F(x_1),0)$. We compute explicitly that
\begin{multline*}
\int\limits_D e_\eps(u_\eps)\psi(x)+\trace(\nabla u_\eps\cdot\Psi(x)\cdot\nabla u_\eps^\top)\,dx\\
=\frac{1}{2}\int\limits_D (f_\eps(u_\eps),\partial_1u_\eps)\cdot (F\partial_1F)\,dx+\frac{1}{2}
\int\limits_D |\partial_1u_\eps|^2\cdot |\partial_1 F|^2\,dx.
\end{multline*}
This example shows that the noise has a non-trivial impact on the energy evolution. The first term on the right-hand side can be absorbed into the other terms in the corresponding equation on $E_\eps$. Still, the second term is positive and scales in $\eps$ like the Ginzburg-Landau energy. If $\F$ is constant, this problem does not occur: the whole extra term~\eqref{Extra} is zero. The choice of a constant forcing is, however, incompatible with the technique we used in the proof of Theorem~\ref{existence}. 
\end{remark}

\subsection{Energy estimates and corollaries}
For the parabolic Ginzburg-Landau equation, i.e., for \eqref{SGL_Strat} with $\F=0$, the energy of the solution decreases with time. Therefore, the energy at the time $t=0$ provides a sufficient control on the energy at later times. If the forcing is nontrivial, this is not true anymore. We now show that a certain control over the energy can be re-established via the Gronwall argument.

\begin{proposition}\label{EnergyEstimate}
There exists a constant $K_{1}$ depending on $||\F||_{C^2}$ but not on the time-horizon $T$ and $\eps\in(0,1)$ such that
\[ \MExp\big[E_\eps(u_\eps(t))\big]\leqslant \exp(K_{1}t)\cdot E_{\eps}(u_{\eps}^{0})
\]
holds for all $t\in[0,T]$ and $\eps\in(0,1)$. 
\end{proposition}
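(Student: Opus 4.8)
The plan is to feed the exact energy identity of Proposition~\ref{Energy_dynamics} into a Gronwall argument, after a stopping-time localization that kills the It\^o term. The starting observation is that the ``stochastic'' contribution to \eqref{Energy_Evolution}, i.e. the last three lines $S_3,\dots,S_7$, is exactly the quantity \eqref{Extra},
\[
\int_{t_1}^{t_2}\!\!\int_D e_\eps(u_\eps)\,\psi(x)+\trace\big(\nabla u_\eps\cdot\Psi(x)\cdot\nabla u_\eps^{\top}\big)\,dx\,d\tau ,
\]
and that $\psi$ and $\Psi$ depend only on $\F$ and its derivatives up to second order, so $\lVert\psi\rVert_{L^\infty}+\lVert\Psi\rVert_{L^\infty}\leqslant C(\lVert\F\rVert_{C^2})$. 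Combined with the elementary bound $|\nabla u_\eps|^2\leqslant 2e_\eps(u_\eps)$, this yields the pointwise estimate
\[
e_\eps(u_\eps)\,\psi+\trace\big(\nabla u_\eps\cdot\Psi\cdot\nabla u_\eps^{\top}\big)\leqslant K_1\, e_\eps(u_\eps)
\]
with $K_1=K_1(\lVert\F\rVert_{C^2})$ that sees neither $\eps$ nor $T$; integrating over $D$ bounds the extra term by $K_1\int_{t_1}^{t_2}E_\eps(u_\eps(\tau))\,d\tau$. The dissipation term $-\log(1/\eps)\int_{t_1}^{t_2}\!\int_D|f_\eps(u_\eps)|^2$ is nonpositive and will simply be dropped.

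Next I would localize. Put $\tau_R:=\inf\{t\in[0,T]:\lVert u_\eps(t)\rVert_{C^{3,\beta}(\overbar D)}\geqslant R\}\wedge T$; by Theorem~\ref{existence} (and Definition~\ref{def:solution}), $t\mapsto u_\eps(t)$ is a continuous $C^{3,\beta}(\overbar D)$-valued process, hence pathwise bounded on $[0,T]$, so $\tau_R\uparrow T$ almost surely as $R\to\infty$. On $[0,\tau_R]$ every quantity in \eqref{Energy_Evolution} — in particular $E_\eps(u_\eps)$ and $\int_D(f_\eps(u_\eps),(\F\cdot\nabla)u_\eps)\,dx$, which only involve derivatives of $u_\eps$ up to second order — is bounded by a deterministic ($R,\eps$-dependent) constant, so the stopped It\^o integral $\int_0^{t\wedge\tau_R}\!\int_D(f_\eps(u_\eps),(\F\cdot\nabla)u_\eps)\,dx\,dB_\tau$ is a genuine square-integrable martingale with zero expectation. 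Taking expectations in \eqref{Energy_Evolution} with $t_1=0$, $t_2=t\wedge\tau_R$, discarding the dissipation term, and using the bound above together with $E_\eps(u_\eps(\tau))\mathbf{1}_{\{\tau<\tau_R\}}\leqslant E_\eps(u_\eps(\tau\wedge\tau_R))$ and Tonelli, I get
\[
\MExp\big[E_\eps(u_\eps(t\wedge\tau_R))\big]\leqslant E_\eps(u_\eps^{0})+K_1\int_0^t\MExp\big[E_\eps(u_\eps(\tau\wedge\tau_R))\big]\,d\tau .
\]
The map $\tau\mapsto\MExp[E_\eps(u_\eps(\tau\wedge\tau_R))]$ is finite, bounded and continuous, so Gronwall's lemma gives $\MExp[E_\eps(u_\eps(t\wedge\tau_R))]\leqslant\exp(K_1t)\,E_\eps(u_\eps^{0})$.

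Finally I would remove the localization: since $t\wedge\tau_R\to t$ a.s. and $\tau\mapsto E_\eps(u_\eps(\tau))$ is continuous (and nonnegative), $E_\eps(u_\eps(t\wedge\tau_R))\to E_\eps(u_\eps(t))$ a.s., and Fatou's lemma yields $\MExp[E_\eps(u_\eps(t))]\leqslant\liminf_R\MExp[E_\eps(u_\eps(t\wedge\tau_R))]\leqslant\exp(K_1t)\,E_\eps(u_\eps^{0})$. Since $K_1$ was built only from $\lVert\F\rVert_{C^2}$ and the $\eps$- and $T$-free inequality $|\nabla u_\eps|^2\leqslant 2e_\eps(u_\eps)$, and since the same differential inequality holds over any time interval, $K_1$ is independent of $\eps\in(0,1)$ and of $T$, as required.

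The main obstacle is precisely the a priori justification that the It\^o term has vanishing expectation: before any energy estimate is at hand there is no reason why $\MExp\int_0^t\big(\int_D(f_\eps(u_\eps),(\F\cdot\nabla)u_\eps)\,dx\big)^2 d\tau$ should be finite, so the stopping-time reduction is essential, and one must check that the Gronwall constant it produces is uniform in $R$ — which it is, because $K_1$ never depends on $R$. The remaining work is bookkeeping: verifying the $L^\infty$ bounds on $\psi,\Psi$ in terms of $\lVert\F\rVert_{C^2}$, the measurability/continuity of $\tau\mapsto\MExp[E_\eps(u_\eps(\tau\wedge\tau_R))]$, and the Tonelli exchange, none of which poses a genuine difficulty.
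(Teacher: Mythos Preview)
Your proof is correct and follows essentially the same route as the paper: bound the ``stochastic'' remainder \eqref{Extra} by $K_1\int E_\eps(u_\eps)\,d\tau$ using only $\lVert\F\rVert_{C^2}$, drop the nonpositive dissipation, take expectation, and apply Gronwall. The only difference is that the paper simply asserts the It\^o integral ``is a martingale'' and hence has zero expectation, whereas you supply the stopping-time localization and Fatou argument that actually justifies this step; your version is the more careful one.
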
 

\proof
We consider~\eqref{Energy_Evolution} with $t_{1}=0, \, t_{2}=t$ and take the expectation on both sides. This gives the identity 
\begin{multline*}
\MExp\Big[E_\eps(u_\eps(t))+ \log(1/\eps)\int\limits_0^t\int\limits_D |f_\eps(u_{\eps})|^2\,dxd\tau\Big]\\
=E_\eps(u_\eps^{0})
+\MExp\Big[\,\int\limits_0^t\int\limits_D e_\eps(u_\eps)\psi(x)+\trace(\nabla u_\eps\cdot\Psi(x)\nabla u_\eps^\top)\,dxd\tau\Big].
\end{multline*}
We have that $\MExp[E_\eps(u_\eps^{0})]=E_\eps(u_\eps^{0})$ because the initial data is non-random. 
The It\^{o} integral does not contribute because it is a martingale. Since the functions $\psi(x)$ and $\Psi(x)$ depend only on the field $\F$ and its derivatives up to the second order, the integrand on the right-hand side can be estimated from above as follows:
\[\Big|\int\limits_0^t\int\limits_D e_\eps(u_\eps)\psi(x)+\trace(\nabla u_\eps\cdot\Psi(x)\cdot \nabla u_\eps^\top)\,dxd\tau\Big|
\leqslant K_{1}(||\F||_{C^2})\int\limits_0^t E_\eps(u_\eps(\tau))\,d\tau.
\]
With the Fubini theorem, we conclude that
\[\MExp\big[E_\eps(u_\eps(t))\big] \leqslant E_\eps(u_\eps^{0})+K_{1}\int\limits_0^t \MExp\big[E_\eps(u_\eps(\tau))\big]\,d\tau.
\]
for all $t\in[0, T]$. We apply the Gronwall lemma to the function $\MExp\big[E_\eps(u_\eps(t))\big]$ and so obtain the result.
\qed

We reformulate and extend the statement of Proposition~\ref{EnergyEstimate} for the rescaled energy functional $\mathcal E_{\eps}(t)$ \eqref{eq:RescaledEnergyDef}. That will be more convenient for the next section.
  \begin{corollary}\label{EnergyBound}
Let $K_{1}$ be the constant found in Proposition~\ref{EnergyEstimate}. Then for the process~$\mathcal{E}_\eps(t)$, there holds
\begin{equation}\label{eq:RescaledEnergyEst}
\sup\nolimits_{\eps\in(0,1)}\sup\nolimits_{t\in[0,T]}\MExp\big[\mathcal{E}_\eps(t)\big]\leqslant \exp(K_{1}T)\cdot \sup\nolimits_{\eps\in(0,1)}\mathcal{E}_\eps(0)<\infty.
\end{equation}

Moreover, the process $(\mathcal{E}_{\eps}(t))^2$ satisfies
\begin{equation}\label{eq:RescaledEnergySquared}
\sup\nolimits_{\eps\in(0,1)}\sup\nolimits_{t\in[0,T]}\MExp\big[(\mathcal{E}_\eps(t))^{2}\big]<\infty.
\end{equation}
\end{corollary}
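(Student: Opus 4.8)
The first bound \eqref{eq:RescaledEnergyEst} follows at once from Proposition~\ref{EnergyEstimate} by rescaling: multiplying the estimate there by $k_\eps$ and recalling \eqref{eq:RescaledEnergyDef}, i.e.\ that $\mathcal{E}_\eps(t)=k_\eps E_\eps(u_\eps(t))$, gives $\MExp[\mathcal{E}_\eps(t)]\leqslant\exp(K_1 t)\,\mathcal{E}_\eps(0)\leqslant\exp(K_1 T)\,\mathcal{E}_\eps(0)$ for every $t\in[0,T]$; taking the supremum over $\eps\in(0,1)$ and $t\in[0,T]$ and noting that $\sup_{\eps\in(0,1)}\mathcal{E}_\eps(0)<\infty$ by the standing assumption \eqref{eq:EnergyAtTimeZero} yields the claim.

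For \eqref{eq:RescaledEnergySquared} the plan is to run a Gronwall estimate for $\MExp[\mathcal{E}_\eps(t)^2]$ directly from the energy identity \eqref{Energy_Evolution}. Multiplying \eqref{Energy_Evolution} with $t_1=0$, $t_2=t$ by $k_\eps$ and using $k_\eps\log(1/\eps)=1$, one obtains
\[
\mathcal{E}_\eps(t)=\mathcal{E}_\eps(0)-\int_0^t\!\int_D|f_\eps(u_\eps)|^2\,dx\,d\tau-k_\eps\int_0^t\sigma_\eps(\tau)\,dB_\tau+k_\eps\int_0^t R_\eps(\tau)\,d\tau,
\]
where $\sigma_\eps(\tau):=\int_D(f_\eps(u_\eps),(\F\cdot\nabla)u_\eps)\,dx$ and $R_\eps(\tau):=\int_D e_\eps(u_\eps)\psi(x)+\trace(\nabla u_\eps\cdot\Psi(x)\cdot\nabla u_\eps^\top)\,dx$, so that $k_\eps|R_\eps(\tau)|\leqslant K_1\mathcal{E}_\eps(\tau)$ by the very bound used in the proof of Proposition~\ref{EnergyEstimate}. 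Since $\mathcal{E}_\eps\geqslant0$ and the dissipation term is nonnegative, it may be discarded; squaring, using $(a+b+c)^2\leqslant 3(a^2+b^2+c^2)$ together with the Cauchy--Schwarz inequality in time, and applying the It\^o isometry to the martingale term, I arrive at
\[
\MExp[\mathcal{E}_\eps(t)^2]\leqslant 3\,\mathcal{E}_\eps(0)^2+3K_1^2 T\int_0^t\MExp[\mathcal{E}_\eps(\tau)^2]\,d\tau+3k_\eps^2\,\MExp\!\Big[\int_0^t\sigma_\eps(\tau)^2\,d\tau\Big].
\]

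The decisive step is to control $\sigma_\eps(\tau)$ by the Ginzburg--Landau energy alone. The bound $\sigma_\eps(\tau)^2\leqslant\|\F\|_\infty^2\big(\int_D|f_\eps(u_\eps)|^2\big)\big(\int_D|\nabla u_\eps|^2\big)$ coming from Cauchy--Schwarz is \emph{not} adequate, because $\int_D|f_\eps(u_\eps)|^2$ is not dominated by the energy and the dissipation carries the coefficient $\log(1/\eps)$, which degenerates as $\eps\uparrow1$, so the dissipation cannot absorb it uniformly in $\eps\in(0,1)$. Instead I would integrate by parts: as $\F$ is compactly supported in $D$, the field $(\F\cdot\nabla)u_\eps$ vanishes near $\partial D$ and no boundary terms appear; writing $f_\eps(u_\eps)=\Delta u_\eps+\eps^{-2}(1-|u_\eps|^2)u_\eps$ and using $(u_\eps,\partial_l u_\eps)=-\tfrac12\partial_l(1-|u_\eps|^2)$ one computes
\[
\sigma_\eps(\tau)=-\int_D\partial_k u_\eps^j\,\partial_k F^l\,\partial_l u_\eps^j\,dx+\int_D(\diverg\F)\,e_\eps(u_\eps)\,dx,
\]
whence $|\sigma_\eps(\tau)|\leqslant C\|\F\|_{C^1}E_\eps(u_\eps(\tau))$ and therefore $k_\eps^2\sigma_\eps(\tau)^2\leqslant C\|\F\|_{C^1}^2\,\mathcal{E}_\eps(\tau)^2$. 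Inserting this and applying Gronwall's lemma gives $\MExp[\mathcal{E}_\eps(t)^2]\leqslant 3\,\mathcal{E}_\eps(0)^2\exp\!\big((3K_1^2 T+3C\|\F\|_{C^1}^2)\,t\big)$, which is bounded uniformly in $\eps\in(0,1)$ and $t\in[0,T]$ because $\sup_{\eps\in(0,1)}\mathcal{E}_\eps(0)<\infty$.

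Finally, the usual rigour issues must be handled. A priori $\MExp[\mathcal{E}_\eps(t)^2]$ is not known to be finite and the stochastic integral is not obviously a true martingale, so I would first carry out the whole computation up to the stopping time $t\wedge\tau_n$ with $\tau_n:=\inf\{s\in[0,T]:E_\eps(u_\eps(s))\geqslant n\}$ (set equal to $T$ when the set is empty). By Theorem~\ref{existence} the path $s\mapsto E_\eps(u_\eps(s))$ is continuous and finite, so $\tau_n\uparrow T$ almost surely; on $[0,t\wedge\tau_n]$ all quantities above are bounded, the stopped stochastic integral is a martingale, the Gronwall bound holds with a constant independent of $n$, and Fatou's lemma removes the truncation. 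The integrations by parts are legitimate because $u_\eps(t)$ is a $C^{3,\beta}(\overbar{D})$-valued process. The only genuine obstacle is the estimate on $\sigma_\eps$: one must exploit the compact support of $\F$ and integrate by parts, rather than invoke Cauchy--Schwarz, so that the convective contribution to the energy balance is controlled by the energy itself with a constant depending only on $\|\F\|_{C^1}$ --- this is precisely what makes the second-moment bound uniform over $\eps\in(0,1)$.
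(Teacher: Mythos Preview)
Your proof is correct and follows essentially the same route as the paper. The only cosmetic difference is that the paper applies the It\^o formula directly to $(\mathcal{E}_\eps(t))^2$ (so the quadratic variation produces the $k_\eps^2\sigma_\eps^2$ term and the cross term $-2\mathcal{E}_\eps\int_D|f_\eps|^2$ is manifestly negative and dropped), whereas you square the rescaled energy identity and invoke the It\^o isometry; both arguments hinge on the identical key step, namely the integration by parts that rewrites $\sigma_\eps$ as $\int_D e_\eps(u_\eps)\diverg\F-\nabla\F:(\nabla u_\eps\otimes\nabla u_\eps)\,dx$ and thereby bounds $k_\eps|\sigma_\eps|$ by a constant times $\mathcal{E}_\eps$. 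Your additional localisation via stopping times is a welcome point of rigour that the paper leaves implicit.
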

\proof
The first claim follows immediately from Proposition~\ref{EnergyEstimate}. The quantity~$\mathcal{E}_\eps(0)$ is uniformly bounded in $\eps$, by virtue of~\eqref{eq:EnergyAtTimeZero}.

To prove \eqref{eq:RescaledEnergySquared}, we argue essentially as in the proof of Proposition~\ref{EnergyEstimate}. We start with the It\^o equation for~$(\mathcal E_{\eps}(t))^{2}$. The classical It\^o formula (\cite{Karatzas_Shreve}, Theorem~3.3) implies that
\[d (\mathcal E_{\eps}(t))^{2}=2\mathcal E_{\eps}(t)\,d\mathcal E_{\eps}(t)+d\,\langle \mathcal E_{\eps}(t)\rangle_{t}.
\]
The equation for $\mathcal E_{\eps}(t)$ is simply the equation for $ E_{\eps}(u_{\eps}(t))$ multiplied by $k_{\eps}$. Therefore, the quadratic variation is given by
\[d\,\langle \mathcal E_{\eps}(t)\rangle_{t}=k_{\eps}^{2}\big(\int\limits_{D}(f_{\eps}(u_{\eps}), (\F\cdot\nabla )u_{\eps})\,dx\big)^{2}dt.
\]
The equation on $(\mathcal E_{\eps}(t))^{2}$ thus reads  
\begin{align}\label{eq:ItoForSquare}
(\mathcal E_{\eps}(t_{2}))^{2}=& \ (\mathcal E_{\eps}(t_{1}))^{2}-2\int\limits_{t_{1}}^{t_{2}}\mathcal E_{\eps}(\tau)\cdot \int\limits_D |f_\eps(u_\eps)|^2\,dxd\tau\nonumber\\
&-2k_{\eps}\int\limits_{t_{1}}^{t_{2}} \mathcal E_{\eps}(\tau)\cdot \int\limits_D (f_\eps(u_\eps), (\F\cdot\nabla) u_\eps)\,dx dB_\tau\nonumber\\
&+2k_{\eps}\int\limits_{t_{1}}^{t_{2}}\mathcal E_{\eps}(\tau)\cdot\int\limits_D e_\eps(u_\eps)\psi(x)+\trace(\nabla u_\eps\cdot\Psi(x)\cdot\nabla u_\eps^\top)\,dxd\tau\nonumber\\
&+k_{\eps}^{2}\int\limits_{t_{1}}^{t_{2}}\big(\int\limits_{D}(f_{\eps}(u_{\eps}), (\F\cdot\nabla )u_{\eps})\,dx\big)^{2}\,d\tau.
\end{align}
We easily see that 
\[\Big|k_{\eps}\int\limits_{t_{1}}^{t_{2}}\mathcal E_{\eps}(\tau)\cdot\int\limits_D e_\eps(u_\eps)\psi(x)+\trace(\nabla u_\eps\cdot\Psi(x)\cdot\nabla u_\eps^\top)\,dxd\tau \Big|\leqslant C \int\limits_{t_{1}}^{t_{2}} (\mathcal E_\eps(\tau))^{2}\, d\tau.
\]
For a matrix-valued function $A(x)=(a^{ij}(x))$, the divergence $\diverg A(x)$ is a vector field with the components given by~$(\diverg A)^{j}(x)=\partial_{i}a^{ij}(x)$.
Using this notation and integrating by parts twice, we obtain that
\begin{multline*}
\int\limits_{D}(f_{\eps}(u_{\eps}), (\F\cdot\nabla )u_{\eps})\,dx=\int\limits_{D}(\F, \diverg (\nabla u_{\eps}\otimes \nabla u_{\eps})-\nabla e_{\eps}(u_{\eps}))\,dx\\
=\int\limits_{D}e_{\eps}(u_{\eps})\cdot \diverg \F - \nabla \F:(\nabla u_{\eps}\otimes \nabla u_{\eps})\,dx.
\end{multline*}
Consequently, there holds
\[
k_{\eps}^{2}\int\limits_{t_{1}}^{t_{2}}\big(\int\limits_{D}(f_{\eps}(u_{\eps}), (\F\cdot\nabla )u_{\eps})\,dx\big)^{2}\,d\tau\leqslant 
C \int\limits_{t_{1}}^{t_{2}} (\mathcal E_\eps(\tau))^{2}\, d\tau.
\]
 
We set $t_{1}=0$, $t_{2}=t$ in \eqref{eq:ItoForSquare} and obtain that
\[
\MExp\big[(\mathcal E_\eps(t))^{2}\big]\leqslant (\mathcal E_\eps(0))^{2}+C \int\limits_0^t \MExp\big[(\mathcal E_\eps(\tau))^{2}\big]\, d\tau
\]
for any $t\in [0, T]$.
It remains to apply the Gronwall lemma, as in the proof of Proposition~\ref{EnergyEstimate}.
\qed

We finally obtain some information on the solution.
\begin{corollary}
There exists a positive constant $K_{2}=K_{2}(T)$ such that 
\begin{equation}\label{eq:L2Bound}
\sup\nolimits_{\eps\in(0,1)}\sup\nolimits_{t\in[0,T]} \MExp\big[ \lVert u_\eps(t)\rVert_{L^{2}}^{2}\big]\leqslant K_{2}.
\end{equation}
\end{corollary}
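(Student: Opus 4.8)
The plan is to obtain the bound directly from the energy control of Corollary~\ref{EnergyBound}, exploiting that the potential term in $e_\eps$ pins $|u_\eps|^2$ near $1$ up to an $\eps$-small error.

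First I would use the pointwise inequality $\tfrac{1}{4\eps^2}(1-|u_\eps(x,t)|^2)^2\leqslant e_\eps(u_\eps(x,t))$, which after integration over $D$ reads $\int_D(1-|u_\eps(t)|^2)^2\,dx\leqslant 4\eps^2 E_\eps(u_\eps(t))$, valid $\pP$-almost surely for every $t\in[0,T]$. Writing $|u_\eps(t)|^2=1-(1-|u_\eps(t)|^2)$, integrating over $D$, and applying the Cauchy--Schwarz inequality to the last term, I get
\[
\lVert u_\eps(t)\rVert_{L^2}^2=\int_D|u_\eps(t)|^2\,dx\leqslant |D|+|D|^{1/2}\Big(\int_D(1-|u_\eps(t)|^2)^2\,dx\Big)^{1/2}\leqslant |D|+2|D|^{1/2}\,\eps\sqrt{E_\eps(u_\eps(t))},
\]
where $|D|$ denotes the Lebesgue measure of $D$.

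Next I would take the expectation, use the concavity of $\sqrt{\,\cdot\,}$ (Jensen's inequality) together with the identity $E_\eps(u_\eps(t))=\log(1/\eps)\,\mathcal{E}_\eps(t)$, to arrive at
\[
\MExp\big[\lVert u_\eps(t)\rVert_{L^2}^2\big]\leqslant |D|+2|D|^{1/2}\,\big(\eps^2\log(1/\eps)\big)^{1/2}\big(\MExp[\mathcal{E}_\eps(t)]\big)^{1/2}.
\]
Since $\eps^2\log(1/\eps)\leqslant \tfrac{1}{2e}$ for all $\eps\in(0,1)$, and $\sup_{\eps\in(0,1)}\sup_{t\in[0,T]}\MExp[\mathcal{E}_\eps(t)]<\infty$ by~\eqref{eq:RescaledEnergyEst}, the right-hand side is bounded uniformly in $\eps\in(0,1)$ and $t\in[0,T]$, which gives the claim; the resulting constant $K_2$ depends on $T$ only through the factor $\exp(K_1 T)$ in Corollary~\ref{EnergyBound}.

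There is essentially no obstacle here. The two points that need a little attention are that the error term in the first display carries the prefactor $\eps\sqrt{E_\eps}$ rather than $\sqrt{E_\eps}$, so one must pass to the rescaled energy $\mathcal{E}_\eps$ before invoking the uniform-in-$\eps$ bound, and that Jensen's inequality is applied in the direction appropriate for the concave function $\sqrt{\,\cdot\,}$. I note that one could instead bypass the energy estimates and use the maximum-principle bound $|u_\eps|\leqslant 1$ established in the proof of Theorem~\ref{existence}, which yields $\lVert u_\eps(t)\rVert_{L^2}^2\leqslant |D|$ outright; the argument above is preferable because it exhibits the $L^2$-bound as a genuine corollary of the energy control and carries over to settings where no such pointwise bound is available.
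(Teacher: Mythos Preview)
Your proof is correct and follows essentially the same route as the paper: both arguments write $|u_\eps|^2=1-(1-|u_\eps|^2)$, control $\int_D|1-|u_\eps|^2|\,dx$ by Cauchy--Schwarz and the potential part of the energy, and then invoke the uniform bound~\eqref{eq:RescaledEnergyEst}. The only cosmetic difference is that the paper applies Young's inequality to linearize $(\eps^2\log(1/\eps)\,\mathcal E_\eps)^{1/2}$ \emph{before} taking expectation, whereas you take expectation first and then apply Jensen to the concave square root; both variants are equally valid.
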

\proof
For every $t\in[0,T]$ and every $\omega\in\Omega $ we have that
\begin{multline*}
\int\limits_{D}|u_{\eps}(t, \omega, x)|^{2}\,dx=\int\limits_{D}(|u_{\eps}|^{2}-1)\,dx+C\leqslant C\big(\int\limits_{D}(|u_{\eps}|^{2}-1)^{2}\,dx\big)^{1/2}+C\\
\leqslant  C(\eps^2\log(1/\eps)\mathcal E_{\eps}(t))^{1/2}+C\leqslant \eps^{2}\log(1/\eps)\mathcal E_{\eps}(t)+C.
\end{multline*}
 We use the embedding $L^2\hookrightarrow L^1$\! in the second step and Young's inequality in the last step. All constants that we denote with $C$ depend only on the domain~$D$. Now \eqref{eq:RescaledEnergyEst} gives the estimate
\[\sup\nolimits_{\eps\in(0,1)}\sup\nolimits_{t\in[0,T]} \MExp\big[ \lVert u_\eps(t)\rVert_{L^{2}}^{2}\big]\leqslant \sup\nolimits_{\eps\in(0,1)} \{\eps^{2}\log(1/\eps)\mathcal E_{\eps}(0)\}\cdot \exp\{K_{1}T\}+C.
\]
Since we consider $\eps\in(0,1)$, the function $\eps^{2}\log(1/\eps)$ is uniformly bounded, and we obtain~\eqref{eq:L2Bound} by recalling~\eqref{eq:EnergyAtTimeZero}.
\qed

\section{Tightness for the Jacobian and the stochastic vortices}\label{sec:tightness}

In this section, we prove Theorems \ref{tightness} and~\ref{structure}. Both proofs rely on the deterministic results of Jerrard and Soner from~\cite{JerrardSoner}. We start with the result on tightness.
\begin{proof}[Proof of Theorem~\ref{tightness}.]
We fix some $t\in[0,T]$.
The tightness of the family $(\mu_\eps(t))_{\eps}$ follows from \eqref{eq:RescaledEnergyEst} and the compact embedding 
$L^1\Subset (C_0^{0,\alpha})^*\!.
$
Indeed, by the Chebyshev inequality we have that 
\[\pP\{\lVert \mu_{\eps}(t)\rVert_{L^{1}}\geqslant \lambda\}\leqslant (1/\lambda)\MExp\big[\lVert \mu_{\eps}(t)\rVert_{L^{1}}\big]=(1/\lambda)\MExp\big[\mathcal E_{\eps}(t)\big]
\]
for any $\lambda>0$.
The estimate \eqref{eq:RescaledEnergyEst} implies that there exists a constant $C$ such that 
\[\sup\nolimits_{\eps\in(0,1)}\pP\{\lVert \mu_\eps(t)\rVert_{L^1}\geqslant \lambda\}\leqslant C/\lambda\]
for any $\lambda>0$.
By choosing $\lambda$ large enough, we can make the right-hand side of the last inequality arbitrarily small. Hence, the probability that all $(\mu_{\eps}(t))_{\eps}$ belong to a bounded subset of $L^{1}$ can be made arbitrarily close to one. This subset is compact in the topology of~$(C_0^{0,\alpha})^*$\!. The family $(\mu_{\eps}(t))_{\eps}$ is thus by definition tight on $(C_0^{0,\alpha})^*$.

The tightness of the Jacobians will follow with the same argument applied to the estimate
\begin{equation}\label{Jacobian_expectation}
\sup\nolimits_{\eps\in(0,1)}\sup\nolimits_{t\in[0,T]}\MExp\big[\lVert J(u_\eps(t))\rVert_{(C_0^{0,\alpha})^*}\big]\leqslant C
\end{equation}
and the compact embedding  
 $(C_0^{0,\alpha_1})^*\Subset (C_0^{0,\alpha_2})^*$ for $0\leqslant\alpha_1<\alpha_2\leqslant 1$. 

We obtain~\eqref{Jacobian_expectation} by applying, pointwise in~$\omega$, the results on the Jacobian derived in~\cite{JerrardSoner}. These results are by themselves quite delicate, but we use them below in a very straightforward manner. By Theorem~\ref{existence}, the solution~$u_{\eps}(x, t,\omega)$ of \eqref{SGL_Strat} almost surely belongs to the space $C_t^{\beta}C_x^{3,\beta}(\overbar {D_{T}})$ with a $\beta\in(0, 1/2)$. Let $A\subset \Omega$ be the set of full measure on which this holds. We consider arbitrary $\omega\in A$ and $t\in[0, T]$. For them, $u_\eps(x, t,\omega)$ belongs to the Sobolev space~$H^{1}$. We can thus apply \cite{JerrardSoner}, Proposition~3.2 to the Jacobian of $u_{\eps}(x, t,\omega)$: $J(u_\eps(x, t,\omega))$ can be decomposed into a sum 
\[J(u_\eps(x, t,\omega))=J_0^\eps(t,\omega)+J_1^\eps(t,\omega).
\]
For the first part $J_0^\eps$, we have the estimate
\[\lVert J_0^\eps(t,\omega)\rVert_{(C^0)^*}\leqslant C\,\mathcal E_\eps(u_{\eps}(t,\omega)).
\]
For the remaining part $J_1^\eps$, we have the estimate
\[\lVert J_1^\eps(t,\omega)\rVert_{(C_0^{0,\alpha})^*}\leqslant q_{\alpha}(\eps)\cdot\mathcal E_\eps(u_{\eps}(t,\omega))
\]
that holds for every $\alpha\in(0, 1]$, with $q_{\alpha}(\eps)\to 0$ for $\eps\to 0$. Therefore, for every $\alpha \in(0, 1]$, for every~$t\in[0,T]$, and for every $\omega\in A$, we obtain that
\[\lVert J(u_\eps(t,\omega))\rVert_{(C_0^{0,\alpha})^*}\leqslant (C+q_{\alpha}(\eps))\cdot \mathcal E_\eps(u_{\eps}(t,\omega))\leqslant (C+1)\mathcal E_\eps(u_{\eps}(t,\omega)).
\]
The constant $C$ does not depend on $t$, $\omega$, or $\eps$. Since $\omega$ and $t$ were arbitrary, we may take the expectation and the supremum on both sides in the inequality above. Now \eqref{Jacobian_expectation} follows from \eqref{eq:RescaledEnergyEst}.
\qed
\end{proof}

\begin{remark}
In~\cite{JerrardSoner}, the results on the compactness of the Jacobian are formulated for the space $(C_{c}^{0,\alpha})^{*}$, but the proofs presented are valid in~$(C_{0}^{0,\alpha})^{*}$ as well.
\end{remark}

Now we have everything in place to prove Theorem~\ref{structure}.
\begin{proof}
\proof [of Theorem~\ref{structure}.]
We fix a $t\in[0, T]$ and an $\alpha\in(0, 1)$. We set $q:=2/(1+\alpha)$ and~$p:=2/(1-\alpha)$. Then we have that 
$1/p+1/q=1$. 

We first note that the three embeddings 
\begin{equation*}\label{eq:embeddingLarge}
W_{0}^{1,p}\hookrightarrow C_{0}^{0, \alpha}\hookrightarrow C_{0}^{0, \gamma}\hookrightarrow L^{2}
\end{equation*}
are continuous for $\gamma\in [0, \alpha]$. The first is the special case of the Sobolev embedding (\cite{Brezis_FAPDE}, Theorem~9.17). The second follows from the Ascoli-Arzela theorem. This embedding is \textit{compact} for $\gamma <\alpha$, cf.~\cite{Adams}, Theorem~1.31. The third embedding is elementary. By duality we conclude that the embedding 
\begin{equation}\label{eq:embeddingComp}
L^{2}\Subset W^{-1,q}
\end{equation}
is compact, and the embedding 
\begin{equation}\label{eq:embeddingCont}
(C_{0}^{0, \alpha})^{*}\hookrightarrow W^{-1,q}
\end{equation}
is continuous.

The family $(u_{\eps}(t))_{\eps}$ is tight on the space $W^{-1,q}$, due to \eqref{eq:L2Bound} and \eqref{eq:embeddingComp}. This follows from the argument with the Chebyshev inequality, as in the proof of Theorem~\ref{tightness}~(i). 

On $(C_{0}^{0, \alpha})^{*}$, we consider weakly convergent sequences $(J(u_{\eps_{n}}(t)))_{\eps_{n}}$ and $(\mu_{\eps_{n}}(t))_{\eps_{n}}$ that correspond to the same sequence $\eps_{n}\to 0$. The sequences converge weakly in the space~$W^{-1,q}$\! as well, because of~\eqref{eq:embeddingCont} and the mapping theorem (\cite{Billingsley_Conv}, Theorem~2.7). We also assume without loss of generality that the sequence $(u_{\eps_{n}}(t))_{\eps_{n}}$ converges weakly on~$W^{-1,q}$\!. The limiting probability measures on $W^{-1,q}$\! are denoted by $\mathcal P_{J}$, $\mathcal P_{\mu}$, and $\mathcal P_{u}$. By Theorem~2.8 of \cite{Billingsley_Conv}, the sequence $((J(u_{\eps_{n}}(t)), \mu_{\eps_{n}}(t), u_{\eps_{n}}(t)))_{\eps_{n}}$ converges weakly to the product measure $\mathcal P_{J}\times\mathcal P_{\mu}\times\mathcal P_{u}$ on the space 
\[\mathbb W:=W^{-1,q}\times W^{-1,q}\times W^{-1,q}\!.\]

The space $\mathbb W$ is separable, because $W^{-1,q}$\! is. We can thus apply the Skorokhod representation theorem (\cite{Ethier_Kurtz}, Theorem~3.1.8) to $((J(u_{\eps_{n}}(t)), \mu_{\eps_{n}}(t), u_{\eps_{n}}(t)))_{\eps_{n}}$. We obtain a new probability space $(\tilde \Omega, \mathfrak {\tilde F}, \tilde \pP)$ and on it, a new sequence~$((\tilde J_{n}, \tilde \mu_{n}, \tilde u_{n}))$ of $\mathbb W$-valued variables with the following properties. The sequence converges $\tilde \pP$-almost surely to a random variable $(\tilde J,\tilde \mu, \tilde u)$; for every $n\in \N$, we have that $\mathcal L(\tilde J_{n})=\mathcal L(J(u_{\eps_{n}}(t)))$, $\mathcal L(\tilde \mu_{n})=\mathcal L(\mu_{\eps_{n}}(t))$, and $\mathcal L(\tilde u_{n})=\mathcal L(u_{\eps_{n}}(t))$; finally, 
$\mathcal L(\tilde J)=\mathcal P_{J}$, $\mathcal L(\tilde \mu)=\mathcal P_{\mu}$, and $\mathcal L(\tilde u)=\mathcal P_{u}$ .

The Borel subsets of $H^{1}$ are Borel subsets of $W^{-1,q}$\!, by virtue of the Kuratowski theorem (\cite{VakhaniaTarieladzeChobanyan}, Theorem~1.1). Since the laws of $\tilde u_{n}$ and $u_{\eps_{n}}(t)$ are the same, we actually have that 
\[\tilde \pP\,\{\tilde u_{n}\in H^{1}\}=\pP\,\{ u_{\eps_{n}}(t)\in H^{1}\}=1.
\]
The mapping $u\mapsto \det\nabla u$ from $H^{1}$ to $L^{1}$, and thus to $W^{-1,q}$, is continuous. Using the Kuratowski theorem and the equality of laws once again, we obtain that 
\[\mathcal L(\tilde J_{n}-\det\nabla \tilde u_{n})=\mathcal L (J( u_{\eps_{n}}(t))-\det\nabla( u_{\eps_{n}}(t))).\]
The distribution on the right-hand side is just the Dirac measure centered at zero in the space $W^{-1,q}$. Therefore, we conclude that
\[\tilde \pP\,\{\tilde J_{n}=\det\nabla \tilde u_{n}\}=\pP\,\{J( u_{\eps_{n}}(t))=\det\nabla( u_{\eps_{n}}(t))\}=1,
\]
for every $n\in \N$. In other words, $(\tilde J_{n})$ \textit{is} the sequence of the Jacobians associated to~$(\tilde u_{n})$, with probability one. Therefore, for every $n \in \N$, $\tilde{J}_{n}$ belongs to the space~$L^{1}$\!, almost surely. 

Moreover, we have that  
\begin{equation}\label{eq:UTildeExp}
\tilde \MExp\big[(\mathcal{E}_{\eps_{n}}(\tilde u_{n}))^{2}\big]=\MExp\big[(\mathcal{E}_{\eps_{n}}( u_{\eps_{n}}(t)))^{2}\big]\leqslant C.
\end{equation}
Finally, $(\tilde \mu_{n})$ is the sequence of the rescaled energy densities associated to~$(\tilde u_{n})$, with probability one:
\[\tilde \pP\,\{\tilde \mu_{n}=k_{\eps_{n}}e_{\eps_{n}}(\tilde u_{n})\}=1.
\]
 The justification of the last two statements is the same as in the case of $(\tilde J_{n})$.

We obtain from \eqref{eq:UTildeExp} with the help of Fatou's lemma that 
\begin{equation}\label{eq:Fatou}
\tilde \MExp\big[\liminf _{n\to \infty}(\mathcal{E}_{\eps_{n}}(\tilde u_{n}))^{2}\big]\leqslant C.
\end{equation}
The Chebyshev inequality now implies that 
$\tilde \pP\,\{\liminf _{n\to \infty}\mathcal{E}_{\eps_{n}}(\tilde u_{n})\geqslant m\}\leqslant C/(m^{2})
$
for every $m\in \N$.
This means that the series $\sum_{m=1}^{\infty}\tilde \pP\,\{\liminf _{n\to \infty}\mathcal{E}_{\eps_{n}}(\tilde u_{n})\geqslant m\}$ converges. Therefore, by the Borel-Cantelli lemma, for almost every $\tilde \omega\in \tilde \Omega$ there exists an $m=m(\tilde \omega)$ such that 
\[\liminf _{n\to \infty}\mathcal{E}_{\eps_{n}}(\tilde u_{n}(\tilde \omega))\leqslant m.
\]
We consider an $\tilde \omega$ for which the inequality above holds. There exists a subsequence of~$(\mathcal{E}_{\eps_{n}}(\tilde u_{n}(\tilde \omega)))$ that converges to the limes inferior. The corresponding sequence of indices~$n_{l}\to \infty$ does depend on $\tilde \omega$.
We may assume without loss of generality that the sequence $(\tilde J_{n_{l}}(\tilde \omega))=(J(\tilde u_{n_{l}}(\tilde\omega)))$ converges to $\tilde J(\tilde \omega)$. Similarly, we may assume that~$(\tilde \mu_{n_{l}}(\tilde \omega))=(\mu_{\eps_{n_{l}}}(\tilde u_{n_{l}}(\tilde\omega)))$ converges to $\tilde \mu(\tilde\omega)$. Indeed, both sequences converge almost surely. Now we may apply Theorem 3.1 of~\cite{JerrardSoner} to $(\tilde J_{n_{l}}(\tilde \omega))$ and $(\tilde \mu_{n_{l}}(\tilde \omega))$, and obtain all claims of Theorem~\ref{structure}. First, both sequences~$(\tilde J_{n_{l}}(\tilde \omega))$ and $(\tilde \mu_{n_{l}}(\tilde \omega))$ subconverge (without loss of generality, along the same sequence of indices) not only in the topology of $W^{-1,q}$, but also in that of~$(C_{0}^{0,\alpha})^{*}$. The structure of the limit $\tilde J(\tilde \omega)$ is given by the deterministic result as well. We have that
\begin{equation}\label{eq:representingJacobian}
\tilde J(\tilde \omega)=\pi\sum\limits_{k=1}^{N(\tilde \omega)}d_{k}(\tilde \omega)\delta_{a_{k}(\tilde \omega)},
\end{equation}
with $N(\tilde \omega)\in \N$, $d_{k}(\tilde \omega)\in \Z$ and $a_{k}(\tilde \omega)\in D$. We also have that  
$\tilde J(\tilde \omega)\ll \tilde \mu(\tilde \omega)$ with $\big|\frac{d\tilde J(\tilde \omega)}{d\tilde \mu(\tilde \omega)}(x)\big|\leqslant 1$ for $\tilde \mu(\tilde \omega)$-almost all~$x\in D$. Finally, we have the estimate  
 \begin{equation}\label{eq:representingEstimate}
\lVert\tilde J(\tilde \omega)\rVert_{(C_{0}^{0})^{*}}=\pi\sum_{k=1}^{N(\tilde \omega)}|d_{k}(\tilde \omega)|\leqslant \lim _{l\to \infty}\mathcal{E}_{\eps_{n_{l}}}(\tilde u_{n_{l}}(\tilde \omega))\leqslant \liminf _{n}\mathcal{E}_{\eps_{n}}(\tilde u_{n}(\tilde \omega)).
\end{equation}
 Since $\tilde \omega$ has been chosen arbitrarily, the identity \eqref{eq:representingJacobian} and the estimate \eqref{eq:representingEstimate} do hold for~$\tilde \pP$-almost all $\tilde \omega$. When we take expectation on both sides of~\eqref{eq:representingEstimate}, we get \eqref{eq:ExpectationOfNumberOfVortices}. This concludes the proof.
\qed
\end{proof}

\begin{remark}
Theorem~3.1 in~\cite{JerrardSoner} claims that the estimate $\frac{d\tilde J(\tilde \omega)}{d\tilde \mu(\tilde \omega)}(x)\leqslant 1$ is true for $\tilde \mu(\tilde \omega)$-almost all $x\in D$. However, the proof in~\cite{JerrardSoner} actually leads to a stronger estimate $\big|\frac{d\tilde J(\tilde \omega)}{d\tilde \mu(\tilde \omega)}(x)\big|\leqslant 1$, which we use above. 
\end{remark}

\textbf{Acknowledgements}
We would like to thank anonymous referees for their comments which helped to improve the manuscript. 

This work contains some results from the PhD thesis~\cite{RWTHDiss} of the first author. The first author has been supported by the German Academic Exchange Service (DAAD) grant A/10/86352.


\end{document}